\tikzset{cd/.style=matrix of math nodes,row sep=2em,column sep=2em, text height=1.5ex, text depth=0.5ex}
\tikzset{cdar/.style=->,auto}
\tikzset{overar/.style={draw=white,double=black,double distance=.4pt,very thick}}
\newcommand*{\labelar}[5][1]{\draw[cdar] (#2) --
  node[inner sep=#1pt] {\(\scriptstyle #3\)}
  node[inner sep=#1pt,swap] {\(\scriptstyle #4\)} (#5);}
\renewcommand{\PrintDOI}[1]{\href{http://dx.doi.org/\detokenize{#1}}{doi: \detokenize{#1}}}
\numberwithin{equation}{section}
\theoremstyle{plain}
\newtheorem{theorem}{Theorem}
\numberwithin{theorem}{section}
\newtheorem{lemma}[theorem]{Lemma}
\newtheorem{proposition}[theorem]{Proposition}
\newtheorem{corollary}[theorem]{Corollary}
\newtheorem{assumption}[theorem]{Assumption}
\theoremstyle{definition}
\newtheorem{definition}[theorem]{Definition}
\newtheorem{notation}[theorem]{Notation}
\theoremstyle{remark}
\newtheorem{remark}[theorem]{Remark}
\newtheorem{example}[theorem]{Example}
\newcommand{\tenscorep}{\mathbin{\begin{tikzpicture}[baseline,x=.75ex,y=.75ex] \draw[line width=.2pt] (-0.8,1.15)--(0.8,1.15);\draw[line width=.2pt](0,-0.25)--(0,1.15); \draw[line width=.2pt] (0,0.75) circle [radius = 1];\end{tikzpicture}}}
\newcommand*{\Braiding}[2]{\begin{tikzpicture}[baseline]
    \draw[-] (0,0) -- (1.4ex,1.4ex) node[right,inner sep=0pt] {$\scriptstyle #2$};
    \draw[-,draw=white,line width=2.4pt] (0,1.4ex) -- (1.4ex,0);
    \draw[-] (1.4ex,0) -- (0,1.4ex) node[left,inner sep=0pt] {$\scriptstyle #1$};
  \end{tikzpicture}}
\newcommand*{\Dualbraiding}[2]{\begin{tikzpicture}[baseline]
    \draw[-] (1.4ex,0) -- (0,1.4ex) node[left,inner sep=0pt] {$\scriptstyle #1$};
    \draw[-,draw=white,line width=2.4pt] (0,0) -- (1.4ex,1.4ex);
    \draw[-] (0,0) -- (1.4ex,1.4ex) node[right,inner sep=0pt] {$\scriptstyle #2$};
  \end{tikzpicture}}
\newcommand*{\Corep}[1]{\mathbb{#1}}          
\newcommand*{\DuCorep}[1]{\hat{\Corep{#1}}}   
\newcommand*{\ket}[1]{\lvert#1\rangle}
\newcommand*{\bra}[1]{\langle#1\rvert}
\newcommand*{\inOb}{\mathrel{\in\in}}
\newcommand*{\Cat}{\mathfrak C}     
\newcommand*{\Hilb}{\mathfrak{Hilb}}
\newcommand*{\Corepcat}[1]{\mathfrak{Rep}(#1)}
\newcommand*{\Forget}{\mathfrak{For}}
\newcommand*{\Trivial}{\tau}
\newcommand*{\nb}{\nobreakdash}
\newcommand*{\Star}{$^*$\nb-}
\newcommand*{\C}{\mathbb C}
\newcommand*{\Z}{\mathbb Z}
\newcommand*{\N}{\mathbb N}
\newcommand*{\Comult}[1][]{\Delta_{#1}}
\newcommand*{\Bound}{\mathbb B}
\newcommand*{\congto}{\xrightarrow\sim}
\newcommand*{\univ}{\textup u}
\newcommand*{\Id}{\textup{id}}
\newcommand*{\Multunit}[1][]{\mathbb W^{#1}}
\newcommand*{\multunit}[1][]{\textup W^{#1}}
\newcommand*{\DuMultunit}{\widehat{\mathbb W}}
\newcommand*{\ProjBichar}{\mathbb{P}}
\newcommand*{\BrMultunit}{\mathbb F}
\newcommand*{\Flip}{\Sigma}
\newcommand*{\Cst}{\textup C^*}
\newcommand*{\Wst}{\textup W^*}
\newcommand*{\Hils}[1][H]{\mathcal{#1}}
\newcommand*{\U}{\mathcal U}
\newcommand*{\defeq}{\mathrel{\vcentcolon=}}
\newcommand*{\conj}[1]{\overline{#1}}
\newcommand*{\tworow}[2]{\genfrac{}{}{0pt}{}{#1}{#2}}
\begin{document}

\title[Braided multiplicative unitaries as regular objects]
{Braided multiplicative unitaries\\ as regular objects}

\author{Ralf Meyer}
\email{rmeyer2@uni-goettingen.de}
\address{Mathematisches Institut\\
  Georg-August Universität Göttingen\\
  Bunsenstraße 3--5\\
  37073 Göttingen\\
  Germany}

\author{Sutanu Roy}
\email{sutanu@niser.ac.in}
\address{School of Mathematical Sciences\\
 National Institute of Science Education and Research  Bhubaneswar, HBNI\\
 Jatni, 752050\\
 India}

\begin{abstract}
  We use the theory of regular objects in tensor categories to clarify
  the passage between braided multiplicative unitaries and
  multiplicative unitaries with projection.  The braided
  multiplicative unitary and its semidirect product multiplicative
  unitary have the same Hilbert space representations.
  We also show that the
  multiplicative unitaries associated to two regular objects for the
  same tensor category are equivalent and hence generate isomorphic
  \(\Cst\)\nb-quantum groups.  In particular, a \(\Cst\)\nb-quantum
  group is determined uniquely by its tensor category of
  representations on Hilbert space, and any functor between
  representation categories that does not change the underlying
  Hilbert spaces comes from a morphism of \(\Cst\)\nb-quantum
  groups.
\end{abstract}

\subjclass[2000]{46L89 (81R50 18D10)}

\keywords{quantum group, braided quantum group, multiplicative
  unitary, braided multiplicative unitary, tensor category, quantum
  group representation, quantum group morphism, Tannaka--Krein Theorem}

\thanks{Dedicated to Professor Sh\^oichir\^o Sakai.  The second author
  was partially supported by an INSPIRE faculty award given by D.S.T.,
  Government of India.}

\maketitle

\section{Introduction}
\label{sec:intro}

The Tannaka--Krein Theorem by Woronowicz~\cite{Woronowicz:Tannaka-Krein}
recovers a compact quantum group from its tensor category of finite-dimensional
representations, together with the forgetful functor to the tensor
category of Hilbert spaces.  We shall prove an analogue of this
result for \(\Cst\)\nb-quantum groups, that is, quantum groups
generated by manageable multiplicative unitaries.  Our result
asserts that an isomorphism between the tensor categories of Hilbert
space representations that does not change the underlying Hilbert
spaces lifts to an isomorphism of the underlying Hopf
\Star{}algebras.  More generally, we shall explain how to extract
multiplicative unitaries from representation categories and how to
lift tensor functors between representation categories to
morphisms of multiplicative unitaries.

This article grew out of a suggestion by David Bücher to clarify the
construction of a semidirect product multiplicative unitary from a
braided multiplicative unitary
in \cites{Meyer-Roy-Woronowicz:Qgrp_proj, Roy:Qgrp_with_proj}.  A
braided multiplicative
unitary is supposed to describe a braided \(\Cst\)\nb-quantum group,
which should be a Yetter--Drinfeld algebra over some other
\(\Cst\)\nb-quantum group, equipped with a comultiplication \(B\to
B\boxtimes B\) into its Yetter--Drinfeld twisted tensor square.  The
semidirect product is constructed
in \cites{Meyer-Roy-Woronowicz:Qgrp_proj, Roy:Qgrp_with_proj} by
writing down a
unitary and checking that it is multiplicative.  The data of a
braided multiplicative unitary consists of four unitaries, subject
to seven conditions.  All four unitaries must appear in the explicit
formula, and all seven conditions must be used in the proof that the
semidirect product is multiplicative.  Thus the direct verification
in~\cite{Meyer-Roy-Woronowicz:Qgrp_proj} is rather complicated.
Here we offer a conceptual explanation for this construction.

The main idea behind this is the theory of regular objects in tensor
categories by Pinzari and Roberts~\cite{Pinzari-Roberts:Regular}.  We
prefer to call them natural right absorbers because the adjective
``regular'' is already used for too many other purposes.  A natural
right absorber in~\(\Cat\) gives rise to a multiplicative
unitary~\(\Multunit\) and a tensor functor from~\(\Cat\) to the
tensor category of Hilbert space representations of~\(\Multunit\).
Representations of the semidirect product multiplicative unitary
should be equivalent to representations of the braided
multiplicative unitary.  This idea already appears in a special case
in~\cite{Kasprzak-Meyer-Roy-Woronowicz:Braided_SU2}.  Here we extend
this result to the general case.  Starting with a braided
multiplicative unitary, we define its representation category and
describe a natural right absorber in it by combining two rather
obvious pieces.  The corresponding multiplicative unitary turns out to
be the semidirect product.
We also show that the functor from representations of the braided
multiplicative unitary to representations of the semidirect
product is an isomorphism of categories.  The most difficult point
here is to prove that any representation of the semidirect product
comes from a representation of the braided multiplicative unitary.

The semidirect product comes with a projection, which is another
multiplicative unitary linked to it by pentagon-like equations.  We
interpret this projection through a projection on the representation
category.  More generally, we show that any tensor functor between
representation categories that does not change the underlying Hilbert
spaces lifts to a morphism between the associated multiplicative
unitaries as defined in \cites{Meyer-Roy-Woronowicz:Homomorphisms,
  Ng:Morph_of_Mult_unit}.  This also implies the weak Tannaka--Krein
Theorem for \(\Cst\)\nb-quantum groups mentioned above.
And it gives yet another equivalent description of quantum groups with
projection.

\section{Natural right absorbers in Hilbert space tensor categories}
\label{sec:absorbers}

We are going to recall the notion of a (right) regular object of a
tensor category from~\cite{Pinzari-Roberts:Regular}.  We call such an
object a natural right absorber, avoiding the overused adjective
``regular''.  Going beyond~\cite{Pinzari-Roberts:Regular}, we show
that different natural
right absorbers give isomorphic multiplicative unitaries with respect
to the morphisms of \(\Cst\)\nb-quantum
groups defined in \cites{Meyer-Roy-Woronowicz:Homomorphisms,
  Ng:Morph_of_Mult_unit}.  We also add a further equivalent
description of such quantum group morphisms through functors between
representation categories, and we show that isomorphic
multiplicative unitaries generate isomorphic \(\Cst\)\nb-quantum
groups.

\begin{notation}
  Let~\(\Hilb\)
  denote the \(\Wst\)\nb-category
  of Hilbert spaces.  This is a symmetric monoidal category for the
  usual tensor product~\(\otimes\)
  of Hilbert spaces, with the obvious associator
  \((\Hils_1\otimes\Hils_2)\otimes\Hils_3 \cong
  \Hils_1\otimes(\Hils_2\otimes\Hils_3)\),
  the obvious unit transformations
  \(\C\otimes\Hils\cong\Hils\cong\Hils\otimes\C\),
  and the obvious symmetric braiding
  \[
  \Sigma\colon \Hils_1\otimes\Hils_2\to \Hils_2\otimes\Hils_1,\qquad
  x_1\otimes x_2\mapsto x_2\otimes x_1.
  \]
\end{notation}

Let~\(\Cat\)
be a \(\Wst\)\nb-category
with a faithful forgetful functor \(\Forget\colon \Cat\to\Hilb\).
Faithfulness allows us to assume that
\(\Cat(x_1,x_2) \subseteq \Bound(\Forget(x_1),\Forget(x_2))\)
for all objects \(x_1,x_2\inOb\Cat\)
(we write~\(\inOb\)
for objects of categories, \(\in\)
for arrows).  We say that \(a\in \Bound(\Forget(x_1),\Forget(x_2))\)
\emph{comes from~\(\Cat\)}
if it belongs to \(\Cat(x_1,x_2)\).
We think of objects in~\(\Cat\)
as Hilbert spaces with some extra structure, such as a
representation of a (braided) multiplicative unitary; the morphisms
are those bounded linear maps that preserve this extra structure.
Motivated by this interpretation, we assume the following throughout
this article:

\begin{assumption}
  \label{assum:identity_arrow_equal}
  If \(\Forget(x)=\Forget(x')\)
  and the identity map on this Hilbert space comes from an arrow
  \(x\to x'\), then \(x=x'\).
\end{assumption}

We also want a functor \(\Trivial\colon \Hilb\to\Cat\)
with \(\Forget\circ\Trivial=\Id_{\Hilb}\).
Thus~\(\Trivial\)
acts as the identity on arrows, and the arrows
\(\Trivial(\Hils_1)\to\Trivial(\Hils_2)\)
in~\(\Cat\)
are exactly all bounded linear operators \(\Hils_1\to\Hils_2\).
We abbreviate \(\Trivial(x) \defeq \Trivial\circ\Forget(x)\)
for \(x\inOb\Cat\).
We interpret~\(\Trivial\)
as the functor that equips a Hilbert space~\(\Hils\)
with the ``trivial'' extra structure to get an object in~\(\Cat\).
The existence of~\(\Trivial\)
is a very weak assumption, which follows, for instance, if~\(\Cat\)
is monoidal and has direct sums.

We assume that~\(\Cat\) is also a monoidal category, but not
necessarily braided, such that both \(\Forget\) and~\(\Trivial\) are
strict monoidal functors.  This means, first, that
\(\Forget(x_1\otimes x_2) = \Forget(x_1)\otimes\Forget(x_2)\) for
all \(x_1,x_2\inOb\Cat\) and \(\Trivial(\Hils_1\otimes\Hils_2) =
\Trivial(\Hils_1)\otimes\Trivial(\Hils_2)\) for all
\(\Hils_1,\Hils_2\inOb\Hilb\).  Secondly, that the tensor unit
in~\(\Cat\) is \(\Trivial(\C)\), which \(\Forget\) maps back to the
tensor unit in~\(\Hilb\).  Thirdly, \(\Forget\) and~\(\Trivial\) map
associators and unit transformations in~\(\Cat\) to the obvious
associators and unit transformations in~\(\Hilb\).
Finally, we require the following assumption, which is trivial to
check in all cases we shall consider:

\begin{assumption}
  \label{assum:arrow_cancellative}
  Let \(x_1,x_2,y\inOb\Cat\)
  and let \(a\colon \Forget(x_1)\to\Forget(x_2)\)
  be such that \(a\otimes \Id\)
  comes from an arrow \(x_1\otimes y \to x_2\otimes y\)
  in~\(\Cat\)
  or \(\Id\otimes a\)
  comes from an arrow \(y\otimes x_1 \to y\otimes x_2\)
  in~\(\Cat\).
  Then~\(a\) itself comes from an arrow \(x_1\to x_2\) in~\(\Cat\).
\end{assumption}

\begin{definition}
  A \emph{Hilbert space tensor category} is a monoidal
  \(\Wst\)\nb-category~\(\Cat\)
  with a faithful, strict monoidal functor
  \(\Forget\colon \Cat\to\Hilb\)
  and a strict monoidal functor
  \(\Trivial\colon \Hilb\to\Cat\)
  satisfying \(\Forget\circ\Trivial=\Id_{\Hilb}\)
  and Assumptions \ref{assum:identity_arrow_equal}
  and~\ref{assum:arrow_cancellative}.
\end{definition}

\begin{example}
  \label{exa:Corepcat}
  Let \(\Multunit\in\U(\Hils\otimes\Hils)\)
  be a multiplicative unitary.  Let~\(\Corepcat{\Multunit}\)
  be the \(\Wst\)\nb-category
  of its (right) Hilbert space representations, with intertwiners as
  arrows.  That is, the objects are pairs \((\Hils[K],\Corep{U})\)
  where~\(\Hils[K]\)
  is a Hilbert space and \(\Corep{U}\in\U(\Hils[K]\otimes\Hils)\)
  satisfies
  \(\Multunit_{23} \Corep{U}_{12} = \Corep{U}_{12}\Corep{U}_{13}
  \Multunit_{23}\)
  in \(\U(\Hils[K]\otimes\Hils\otimes\Hils)\).
  The arrows \((\Hils[K]^1,\Corep{U}^1) \to (\Hils[K]^2,\Corep{U}^2)\)
  are operators \(a\in\Bound(\Hils[K]^1,\Hils[K]^2)\)
  with \(\Corep{U}^2 a_1 = a_1 \Corep{U}^1\),
  where \(a_1 \defeq a\otimes\Id_{\Hils}\) in the leg numbering notation.
  The forgetful functor \(\Corepcat{\Multunit} \to\Hilb\)
  forgets the representation, and
  \(\Trivial(\Hils[K]) \defeq (\Hils[K],1)\).
  The tensor product of two representations
  \(\Corep{U}^i \in \U(\Hils[K]^i \otimes\Hils)\),
  \(i=1,2\),
  is
  \(\Corep{U}^1\tenscorep \Corep{U}^2 \defeq \Corep{U}^1_{13}
  \Corep{U}^2_{23} \in \U(\Hils[K]^1\otimes\Hils[K]^2 \otimes\Hils)\).
  Quick computations show that this is again a representation,
  that~\(\tenscorep\) is associative, and that~\(\Trivial(\C)\)
  is a tensor unit, with the usual associator and unit transformations
  from~\(\Hilb\).
  Since an operator of the form
  \(a_1\in \Bound(\Hils[K]^1\otimes\Hils[K]^2)\)
  for \(a\in \Bound(\Hils[K]^1)\)
  commutes with~\(\Corep{U}^2_{23}\),
  it is an intertwiner for \(\Corep{U}^1_{13} \Corep{U}^2_{23}\)
  if and only if~\(a\)
  is one for~\(\Corep{U}^1\).
  Hence Assumption~\ref{assum:arrow_cancellative} holds.
  Assumption~\ref{assum:identity_arrow_equal} holds because our
  objects are indeed Hilbert spaces with extra structure.
\end{example}

\begin{lemma}
  \label{lem:Sigma_over_trivial}
  Let \(x_1,x_2\inOb\Cat\),
  \(\Hils\inOb\Hilb\).
  Then an operator
  \(a\colon \Forget(x_1)\otimes\Hils \to \Forget(x_2)\)
  comes from an arrow \(\hat{a}\in\Cat(x_1\otimes\Trivial(\Hils),x_2)\)
  if and only if the operators
  \(a_\eta\colon \Forget(x_1)\to \Forget(x_2)\),
  \(\xi\mapsto a(\xi\otimes\eta)\),
  come from arrows in \(\Cat(x_1,x_2)\)
  for all \(\eta\in\Hils\).
  Analogous statements hold for operators
  \(\Hils\otimes\Forget(x_1)\to\Forget(x_2)\),
  \(\Forget(x_1)\to\Forget(x_2)\otimes\Hils\),
  and \(\Forget(x_1)\to\Hils\otimes\Forget(x_2)\).
\end{lemma}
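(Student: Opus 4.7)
The forward implication is formal. Given $\hat a \in \Cat(x_1 \otimes \Trivial\Hils, x_2)$ with $\Forget(\hat a) = a$, I would use that each $\eta \in \Hils$ defines a bounded operator $\ket\eta \colon \C \to \Hils$, $\lambda \mapsto \lambda \eta$, which lifts via the strict monoidal functor $\Trivial$ to an arrow $\Trivial\ket\eta \in \Cat(\Trivial\C, \Trivial\Hils)$. Tensoring with $\Id_{x_1}$, precomposing with the right unit isomorphism $x_1 \congto x_1 \otimes \Trivial\C$, and postcomposing with $\hat a$ yields an arrow $x_1 \to x_2$ in $\Cat$ whose underlying operator is precisely $a_\eta$. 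The analogous statements are obtained by the same construction with $\ket\eta$ replaced by $\bra\eta$ and the appropriate unit isomorphism on the other side.

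For the converse, I would proceed by finite-dimensional approximation followed by a closure argument. Fix an orthonormal basis $(e_i)_{i \in I}$ of $\Hils$, and for each finite $F \subseteq I$ let $P_F \defeq \sum_{i \in F} \ket{e_i}\bra{e_i}$ and $a^F \defeq a \circ (\Id_{\Forget(x_1)} \otimes P_F)$. A direct expansion gives
\[
a^F = \sum_{i \in F} \rho \circ \bigl(a_{e_i} \otimes \Trivial\bra{e_i}\bigr),
\]
where $\rho \colon x_2 \otimes \Trivial\C \congto x_2$ is the right unit isomorphism. Each summand comes from an arrow in $\Cat$: $a_{e_i} \in \Cat(x_1, x_2)$ holds by hypothesis, and $\Trivial\bra{e_i} \in \Cat(\Trivial\Hils, \Trivial\C)$ since $\Trivial$ is a functor; hence $a_{e_i} \otimes \Trivial\bra{e_i}$ is an arrow $x_1 \otimes \Trivial\Hils \to x_2 \otimes \Trivial\C$. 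Composing with $\rho$ and summing, $a^F$ comes from an arrow in $\Cat(x_1 \otimes \Trivial\Hils, x_2)$.

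To pass to the limit, I would note that the net $(a^F)$ is uniformly bounded by $\|a\|$ and converges to $a$ in the strong operator topology, since $P_F \to \Id_\Hils$ strongly. Because $\Cat$ is a $\Wst$\nb-category with faithful functor to $\Hilb$, the image of $\Cat(x_1 \otimes \Trivial\Hils, x_2)$ inside $\Bound(\Forget(x_1) \otimes \Hils, \Forget(x_2))$ is ultraweakly closed; bounded SOT-convergent nets are ultraweakly convergent, so $a$ itself lies in this image. The main obstacle is exactly this last step: the finite-dimensional case is essentially a matrix-unit decomposition and is purely algebraic, but for infinite-dimensional $\Hils$ one must genuinely invoke the $\Wst$\nb-structure of $\Cat$ to conclude that the approximating arrows have a limit that is still an arrow. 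Neither Assumption~\ref{assum:identity_arrow_equal} nor Assumption~\ref{assum:arrow_cancellative} is needed in the proof.
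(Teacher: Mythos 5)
Your argument is correct and follows essentially the same route as the paper: the forward direction via $\Id_{x_1}\otimes\Trivial(\ket\eta)$ and the unit isomorphism, and the converse by cutting down with finite-rank projections, lifting each rank-one piece using $a_{e_i}$ and $\Trivial(\bra{e_i})$, and invoking the weak closedness of the hom-spaces of the $\Wst$\nb-category to pass to the limit. The only differences are cosmetic (an orthonormal-basis net instead of an arbitrary net of finite-rank projections, and bounded SOT convergence instead of weak convergence), and your observation that neither Assumption is needed matches the paper's proof.
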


\begin{proof}
  An arrow
  \(\hat{a}\in\Cat(x_1\otimes\Trivial(\Hils),x_2)\)
  gives arrows \(\hat{a}_\eta\)
  in \(\Cat(x_1,x_2)\)
  with \(\Forget(\hat{a}_\eta) = a_\eta\)
  by taking
  \(\hat{a}_\eta \defeq \hat{a}\circ \bigl(\Id_{x_1}\otimes
  \Trivial(\ket{\eta})\bigr)\),
  where \(\ket{\eta}\colon \C\to\Hils\),
  \(\lambda\mapsto \lambda\eta\),
  and where we implicitly identify
  \(x_1\cong x_1\otimes\Trivial(\C)\).
  For the converse, choose an orthonormal
  basis~\((\eta_n)_{n\in\N}\)
  in~\(\Hils\).
  For each \(n\in\N\),
  there is an arrow
  \[
  x_1 \otimes \Trivial(\Hils)
  \xrightarrow{\Id\otimes \Trivial(\bra{\eta_n})}
  x_1 \otimes \Trivial(\C) \cong x_1
  \xrightarrow{\hat{a}_{\eta_n}} x_2.
  \]
  in~\(\Cat\).
  The sum of these operators converges weakly to~\(a\).
  Since \(\Cat(x_1\otimes\Trivial(\Hils),x_2)\)
  is a weakly closed subspace of
  \(\Bound(\Forget(x_1)\otimes \Hils,\Forget(x_2))\),
  it follows that~\(a\)
  comes from~\(\Cat\).
\end{proof}

\begin{remark}
  \label{rem:trivial_unique}
  The functor~\(\Trivial\)
  is unique if it exists.  Let~\(\Hils\)
  be a Hilbert space.  Then any bounded linear operator \(\C\to\Hils\)
  comes from an arrow \(\Trivial(\C)\to\Trivial(\Hils)\)
  in~\(\Cat\).
  Conversely, let \(x\)
  be an object of~\(\Cat\)
  with \(\Forget(x)=\Hils\)
  such that any bounded linear map \(\C\to \Hils\)
  comes from an arrow \(\C\to x\).
  Hence the identity map \(\Trivial(\Hils)\to x\)
  comes from an arrow in~\(\Cat\)
  by Lemma~\ref{lem:Sigma_over_trivial}.  Then \(\Trivial(\Hils) = x\)
  by Assumption~\ref{assum:identity_arrow_equal}.
\end{remark}

Given objects \(x_1,x_2,x_3,y_1,y_2,y_3\inOb\Cat\), there are
canonical maps
\begin{alignat*}{2}
  \Cat(x_1\otimes x_2,y_1\otimes y_2) &\to
  \Cat(x_1\otimes x_2\otimes x_3,y_1\otimes y_2\otimes x_3),
  &\qquad
  T&\mapsto T_{12} = T\otimes \Id_{x_3},\\
  \Cat(x_2\otimes x_3,y_2\otimes y_3) &\to
  \Cat(x_1\otimes x_2\otimes x_3,x_1\otimes y_2\otimes y_3),
  &\qquad
  T&\mapsto T_{23} = \Id_{x_1} \otimes T.
\end{alignat*}
An arrow~\(T_{13}\),
however, cannot always be defined: this would require a
braiding on~\(\Cat\).
Nevertheless, the operator~\(T_{13}\)
may be defined if the object in the middle is of the
form~\(\Trivial(\Hils)\).
Lemma~\ref{lem:Sigma_over_trivial} used twice shows that the flip
operator
\[
\Sigma\colon \Forget(x)\otimes\Hils \to \Hils \otimes \Forget(x),
\qquad \xi\otimes\eta\mapsto \eta\otimes\xi,
\]
comes from an arrow in
\(\Cat(x\otimes\Trivial(\Hils), \Trivial(\Hils) \otimes x)\)
for all \(x\inOb\Cat\),
\(\Hils\inOb\Hilb\).  We use these arrows in~\(\Cat\) to define
\begin{multline*}
  \Cat(x_1\otimes x_2,y_1\otimes y_2) \to
  \Cat(x_1\otimes \Trivial(\Hils)\otimes x_2,y_1\otimes
  \Trivial(\Hils)\otimes y_2),\\
  T\mapsto T_{13} \defeq \Sigma_{23} T_{12} \Sigma_{23}
  = \Sigma_{12} T_{23} \Sigma_{12}.
\end{multline*}

\begin{definition}
  \label{def:natural_absorber}
  Let~\(\Cat\) be a Hilbert space tensor category as above.  A
  \emph{natural right absorber} in~\(\Cat\) is an object
  \(\rho\inOb\Cat\) together with unitaries
  \[
  U^x\colon x\otimes \rho \to \Trivial(x)\otimes\rho\qquad
  \text{for all }x\inOb\Cat
  \]
  with the following properties:
  \begin{enumerate}[label=\textup{(\ref*{def:natural_absorber}.\arabic*)}]
  \item \label{en:natural_absorber1}%
    the unitaries~\(U^x\) are natural, that is, the
    following diagram commutes for any arrow \(a\in\Cat(x_1, x_2)\),
    \(x_1,x_2\inOb\Cat\):
    \[
    \begin{tikzpicture}
      \matrix (m) [cd] {
        x_1\otimes \rho&
        \Trivial(x_1)\otimes\rho\\
        x_2\otimes \rho&
        \Trivial(x_2)\otimes\rho\\
      };
      \labelar{m-1-1}{U^{x_1}}{\cong}{m-1-2};
      \labelar{m-2-1}{U^{x_2}}{\cong}{m-2-2};
      \labelar{m-1-1}{}{a\otimes \Id_\rho}{m-2-1};
      \labelar{m-1-2}{a\otimes \Id_\rho=\Trivial(a)\otimes\Id_\rho}{}{m-2-2};
    \end{tikzpicture}
    \]
  \item \label{en:natural_absorber2}%
    for all \(x_1,x_2\inOb\Cat\), the following diagram of unitaries
    commutes:
    \[
    \begin{tikzpicture}
      \matrix (m) [cd,column sep=3em] {
        x_1\otimes x_2\otimes \rho&
        \Trivial(x_1\otimes x_2)\otimes \rho\\
        x_1\otimes \Trivial(x_2)\otimes \rho&
        \Trivial(x_1)\otimes \Trivial(x_2)\otimes \rho\\
      };
      \labelar{m-1-1}{U^{x_1\otimes x_2}}{}{m-1-2};
      \labelar{m-1-1}{}{U^{x_2}_{23}}{m-2-1};
      \labelar{m-2-1}{U^{x_1}_{13}}{}{m-2-2};
      \draw[double,double equal sign distance] (m-1-2) -- (m-2-2);
    \end{tikzpicture}
    \]
  \end{enumerate}
\end{definition}

\begin{lemma}
  \label{lem:natural_absorber_Trivial}
  If \(\rho\)
  and \((U^x)_{x\inOb\Cat}\)
  are a natural right absorber for~\(\Cat\),
  then \(U^{\Trivial(\Hils)} = \Id_{\Trivial(\Hils)\otimes\rho}\)
  for any Hilbert space~\(\Hils\).
\end{lemma}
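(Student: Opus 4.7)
The plan is to compute $U^{\Trivial(\C)}$ directly from axiom \ref{en:natural_absorber2}, and then propagate to arbitrary $\Hils$ via the naturality axiom \ref{en:natural_absorber1}.

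For the base case, I would specialise \ref{en:natural_absorber2} to $x_1 = x_2 = \Trivial(\C)$. Since $\Trivial$ is strict monoidal and the unit transformations in $\Cat$ forget to identities, Assumption~\ref{assum:identity_arrow_equal} makes the tensor products with $\Trivial(\C)$ collapse strictly, so the three objects $\Trivial(\C) \otimes \Trivial(\C) \otimes \rho$, $\Trivial(\C) \otimes \rho$ and $\Trivial(\Trivial(\C) \otimes \Trivial(\C)) \otimes \rho$ all equal $\rho$. Under these identifications both legs $U^{\Trivial(\C)}_{13}$ and $U^{\Trivial(\C)}_{23}$ reduce to the single unitary $U^{\Trivial(\C)} \colon \rho \to \rho$, and the commuting square reads
\[
U^{\Trivial(\C)} = U^{\Trivial(\C)} \circ U^{\Trivial(\C)}.
\]
Invertibility of the unitary then forces $U^{\Trivial(\C)} = \Id_\rho$.

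For general $\Hils$, each $\eta \in \Hils$ provides $\ket{\eta} \colon \C \to \Hils$ and hence an arrow $\Trivial(\ket{\eta}) \in \Cat(\Trivial(\C), \Trivial(\Hils))$. Applying \ref{en:natural_absorber1} to this arrow and substituting the base case, I get
\[
U^{\Trivial(\Hils)} \circ (\Trivial(\ket{\eta}) \otimes \Id_\rho) = (\Trivial(\ket{\eta}) \otimes \Id_\rho) \circ U^{\Trivial(\C)} = \Trivial(\ket{\eta}) \otimes \Id_\rho.
\]
So $U^{\Trivial(\Hils)}$ fixes every elementary tensor $\eta \otimes \xi$ in $\Hils \otimes \Forget(\rho)$; since such vectors span a dense subspace, $U^{\Trivial(\Hils)} = \Id$. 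The argument contains no substantive obstacle; the one point that requires care is the bookkeeping with the strict unit identifications in the base case.
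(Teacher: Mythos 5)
Your proof is correct and follows essentially the same route as the paper: deduce $U^{\Trivial(\C)}=\Id$ from condition~\ref{en:natural_absorber2} with $x_1=x_2=\Trivial(\C)$ (the square collapsing to $U^{\Trivial(\C)}=U^{\Trivial(\C)}\circ U^{\Trivial(\C)}$), then apply naturality~\ref{en:natural_absorber1} to the arrows $\ket{\eta}\colon\Trivial(\C)\to\Trivial(\Hils)$ and conclude by linearity and density. The only cosmetic remark is that the unit identifications in the base case come from the monoidal structure (unit transformations mapped to the obvious ones in $\Hilb$) rather than from Assumption~\ref{assum:identity_arrow_equal}, but this does not affect the argument.
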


\begin{proof}
  Assumption~\ref{en:natural_absorber2} for
  \(x_1=x_2=\C=\Trivial(\C)\)
  implies \(U^{\Trivial(\C)} = \Id_{\C}\).
  Any vector \(\xi\in\Hils\)
  gives an arrow \(\ket{\xi}\colon \Trivial(\C)\to\Trivial(\Hils)\).
  The naturality assumption~\ref{en:natural_absorber1} applied to
  these arrows gives
  \(U^{\Trivial(\Hils)}(\xi\otimes\eta) = \xi\otimes\eta\)
  for all \(\xi\in\Hils\), \(\eta\in\Forget(\rho)\).
\end{proof}

\begin{example}
  \label{exa:Multunit_absorber}
  Let~\(\Multunit\)
  be a multiplicative unitary and let \(\Cat=\Corepcat{\Multunit}\)
  as in Example~\ref{exa:Corepcat}.  The pentagon equation says that
  the unitary~\(\Multunit\)
  is also a representation of itself.  A unitary
  \(\Corep{U}\in \U(\Hils[K]\otimes\Hils)\)
  is a representation if and only if it is an intertwiner
  \[
  (\Hils[K]\otimes\Hils,\Corep{U}_{13}\Multunit_{23})
  = (\Hils[K]\otimes\Hils,\Corep{U}\tenscorep\Multunit)
  \to (\Hils[K]\otimes\Hils, \Id_{\Hils[K]}\tenscorep\Multunit)
  = (\Hils[K]\otimes\Hils, \Multunit_{23}).
  \]
  We claim that~\(\Multunit\)
  with the family of arrows
  \(\Corep{U}\colon (\Hils[K],\Corep{U})\otimes (\Hils,\Multunit) \to
  (\Hils[K],\Id_{\Hils[K]})\otimes (\Hils,\Multunit)\)
  is a natural right absorber in~\(\Corepcat{\Multunit}\).
  First, the arrows in~\(\Corepcat{\Multunit}\)
  are exactly those operators for which the arrows~\(\Corep{U}\)
  above are natural.  Secondly, the tensor product of two
  representations is defined exactly so as to verify
  \ref{en:natural_absorber2}.
\end{example}

\begin{proposition}[\cite{Pinzari-Roberts:Regular}*{Theorem 2.1}]
  \label{pro:absorber}
  Let \((\Cat,\Forget,\Trivial,\otimes)\)
  be a Hilbert space tensor category and let \(\rho\)
  and~\((U^x)_{x\inOb\Cat}\)
  be a natural right absorber for~\(\Cat\).
  For \(x\inOb\Cat\),
  let \(\Hils^x \defeq \Forget(x)\),
  and let us also write \(U^x\)
  for \(\Forget(U^x)\in \U(\Hils^x\otimes\Hils^\rho)\).
  Then~\(U^\rho\)
  is a multiplicative unitary, and~\(U^x\)
  for \(x\inOb\Cat\)
  is a right representation of~\(U^\rho\).
  This construction gives a fully faithful, strict tensor functor
  from~\(\Cat\)
  to the tensor category \(\Corepcat{U^\rho}\)
  of representations of the multiplicative unitary~\(U^\rho\),
  which intertwines the forgetful functors from \(\Cat\) and
  \(\Corepcat{U^\rho}\) to~\(\Hilb\).
\end{proposition}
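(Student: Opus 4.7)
The plan is to derive multiplicativity, the representation property, and strict monoidal functoriality essentially by feeding the two absorber axioms and Lemma~\ref{lem:natural_absorber_Trivial} into one another, and to deduce full faithfulness afterwards from Assumption~\ref{assum:arrow_cancellative}.

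For the pentagon equation I would exploit that $U^\rho$ is itself an arrow $\rho\otimes\rho\to\Trivial(\rho)\otimes\rho$ in $\Cat$, and apply naturality~\ref{en:natural_absorber1} to it with $x_1=\rho\otimes\rho$, $x_2=\Trivial(\rho)\otimes\rho$. The resulting commutative square reads
\[
(U^\rho\otimes\Id_\rho)\circ U^{\rho\otimes\rho}
= U^{\Trivial(\rho)\otimes\rho}\circ (U^\rho\otimes\Id_\rho).
\]
Axiom~\ref{en:natural_absorber2} unfolds $U^{\rho\otimes\rho}=U^\rho_{13}U^\rho_{23}$, and the same axiom combined with Lemma~\ref{lem:natural_absorber_Trivial} gives $U^{\Trivial(\rho)\otimes\rho}=U^\rho_{23}$. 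Substituting turns the square into the pentagon $U^\rho_{12}U^\rho_{13}U^\rho_{23}=U^\rho_{23}U^\rho_{12}$. Playing exactly the same game with the arrow $U^x\colon x\otimes\rho\to\Trivial(x)\otimes\rho$ in place of $U^\rho$ produces $U^x_{12}U^x_{13}U^\rho_{23}=U^\rho_{23}U^x_{12}$, which is the representation condition of Example~\ref{exa:Corepcat}.

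The rule $x\mapsto(\Hils^x,U^x)$, and identity on underlying operators for arrows, is then a strict monoidal functor $\Cat\to\Corepcat{U^\rho}$ intertwining the forgetful functors. Functoriality on arrows is literally naturality~\ref{en:natural_absorber1}; compatibility with tensor products on objects is literally axiom~\ref{en:natural_absorber2} combined with the definition $\Corep{U}^1\tenscorep\Corep{U}^2\defeq\Corep{U}^1_{13}\Corep{U}^2_{23}$; the unit is preserved by Lemma~\ref{lem:natural_absorber_Trivial}; and compatibility with $\Forget$ is built into the construction.

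The only point needing genuine work -- and the main obstacle -- is fullness. Faithfulness is immediate because $\Forget$ is faithful and the functor is the identity on operators. For fullness, suppose $a\colon\Hils^{x_1}\to\Hils^{x_2}$ is an intertwiner, so $U^{x_2}(a\otimes\Id_\rho)=(a\otimes\Id_\rho)U^{x_1}$. I would rearrange this as
\[
a\otimes\Id_\rho
= (U^{x_2})^*\circ\bigl(\Trivial(a)\otimes\Id_\rho\bigr)\circ U^{x_1}
\]
and observe that each factor on the right is an arrow in $\Cat$: $U^{x_1}$ by the definition of a natural right absorber, $(U^{x_2})^*$ by the $\Wst$\nb-structure on $\Cat$, and $\Trivial(a)\otimes\Id_\rho$ because $\Trivial$ sends every bounded operator to an arrow. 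Hence $a\otimes\Id_\rho$ comes from an arrow in $\Cat(x_1\otimes\rho,x_2\otimes\rho)$, and Assumption~\ref{assum:arrow_cancellative} then promotes this to $a\in\Cat(x_1,x_2)$.
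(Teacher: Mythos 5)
Your proposal is correct and follows essentially the same route as the paper: naturality applied to the arrow $U^x$ (resp.\ $U^\rho$) combined with axiom~\ref{en:natural_absorber2} and Lemma~\ref{lem:natural_absorber_Trivial} yields $U^x_{12}U^x_{13}U^\rho_{23}=U^\rho_{23}U^x_{12}$, hence the pentagon and the representation property, and your fullness argument, writing $a\otimes\Id_\rho=(U^{x_2})^*(\Trivial(a)\otimes\Id_\rho)U^{x_1}$ and invoking Assumption~\ref{assum:arrow_cancellative}, is exactly the paper's composite arrow argument in slightly different words.
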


\begin{proof}
  The condition \ref{en:natural_absorber2} and
  Lemma~\ref{lem:natural_absorber_Trivial} give
  \[
  U^{x\otimes\Trivial(\Hils)} = U^x_{13}\colon
  x\otimes\Trivial(\Hils)\otimes\rho \to
  \Trivial(x)\otimes\Trivial(\Hils)\otimes\rho.
  \]
  Let \(x\inOb\Cat\).
  Then \(U^x\in\Cat(x\otimes\rho,\Trivial(x)\otimes\rho)\)
  is an intertwiner.  So we may apply naturality to it.  This and
  condition \ref{en:natural_absorber2} give the
  commuting diagram of unitaries
  \[
  \begin{tikzpicture}
    \matrix (m) [cd,column sep=3em] {
      \Trivial(x)\otimes \rho\otimes \rho&
      x\otimes \rho\otimes \rho&
      x\otimes \Trivial(\rho)\otimes \rho\\
      \Trivial(x)\otimes \Trivial(\rho)\otimes \rho&
      \Trivial(x\otimes \rho)\otimes \rho&
      \Trivial(x)\otimes \Trivial(\rho)\otimes \rho\\
    };
    \labelar{m-1-2}{}{U^x_{12}}{m-1-1};
    \labelar{m-2-2}{}{U^x_{12}}{m-2-1};
    \labelar{m-1-1}{U^\rho_{23} = U^{\Trivial(x)\otimes\rho}}{}{m-2-1};
    \labelar{m-1-2}{U^{x\otimes \rho}}{}{m-2-2};
    \labelar{m-1-2}{U^\rho_{23}}{}{m-1-3};
    \labelar{m-1-3}{U^x_{13}}{}{m-2-3};
    \draw[double,double equal sign distance] (m-2-2) -- (m-2-3);
  \end{tikzpicture}
  \]
  That is, \(U^x_{12} U^x_{13} U^\rho_{23} = U^\rho_{23} U^x_{12}\).
  When we take \(x=\rho\),
  this is the pentagon equation for~\(U^\rho\).
  For general~\(x\),
  it says that~\(U^x\) is a right representation of~\(U^\rho\).

  The naturality of~\(U^x\)
  says that arrows \(x_1\to x_2\)
  in~\(\Cat\)
  are intertwiners \(U^{x_1}\to U^{x_2}\).
  To prove that we have a fully faithful functor, we must show the
  converse.  So let \(a\colon \Hils^{x_1}\to\Hils^{x_2}\)
  be an intertwiner \(U^{x_1}\to U^{x_2}\).  Then we get an arrow
  \[
  x_1\otimes \rho \xrightarrow{U^{x_1}}
  \Trivial(x_1)\otimes \rho \xrightarrow{\Trivial(a)\otimes\Id_\rho}
  \Trivial(x_2)\otimes \rho
  \xrightarrow{(U^{x_2})^{-1}}
  x_2\otimes \rho
  \]
  Since~\(a\)
  is an intertwiner, the forgetful functor maps this composite arrow
  to \(a\otimes\Id_{\Hils^\rho}\).
  Since this operator comes from~\(\Cat\),
  Assumption~\ref{assum:arrow_cancellative} ensures that~\(a\)
  also comes from~\(\Cat\).
  Thus any intertwiner comes from an arrow in~\(\Cat\).
  This finishes the proof that the functor from~\(\Cat\)
  to the category of right representations of~\(U^\rho\)
  is fully faithful.  By construction, our functor intertwines the
  forgetful functors to~\(\Hilb\).

  The condition \ref{en:natural_absorber2} says
  exactly that \(U^{x_1\otimes x_2}\)
  is the tensor product representation \(U^{x_1}\otimes U^{x_2}\).
  Since we assumed~\(\Forget\)
  to map the associator and unit transformations in~\(\Cat\)
  to the usual ones in~\(\Hilb\),
  the functor \(x\mapsto U^x\)
  from~\(\Cat\)
  to the representation category of~\(U^\rho\)
  is a strict tensor functor.
\end{proof}

We have not found a ``nice'' characterisation when the functor
\(\Cat\to\Corepcat{U^\rho}\)
is essentially surjective, that is, when every representation
of~\(U^\rho\)
comes from an object of~\(\Cat\).
An artificial example where this is not the case is the subcategory of
\(\Corepcat{U^\rho}\)
consisting of all representations that are either trivial or a direct
sum of subrepresentations of~\(\rho\).
This has all the structure that we require.  And it is also closed
under direct sums and subrepresentations.  If \(\Corepcat{U^\rho}\)
is, say, the category of representations of the group~\(\Z\)
of integers, then the representations given by non-trivial characters
on~\(\Z\) are missing in this subcategory.

\begin{example}
  \label{exa:left_corep}
  Let \(\Multunit\in\U(\Hils\otimes\Hils)\)
  be a multiplicative unitary.  A \emph{left representation}
  of~\(\Multunit\) on a Hilbert space~\(\Hils[K]\) is a unitary
  \(\DuCorep{V}\in\U(\Hils\otimes\Hils[K])\) satisfying
  \[
  \DuCorep{V}_{23}\Multunit_{12}
  = \Multunit_{12} \DuCorep{V}_{13} \DuCorep{V}_{23}
  \in \U(\Hils\otimes\Hils\otimes\Hils[K]).
  \]
  The tensor product of two left representations
  \(\DuCorep{V}^i\in\U(\Hils\otimes\Hils[K]^i)\),
  \(i=1,2\), is the left representation on
  \(\Hils[K]^1\otimes\Hils[K]^2\) defined by
  \[
  \DuCorep{V}^1\tenscorep \DuCorep{V}^2 \defeq
  \DuCorep{V}^2_{13}\DuCorep{V}^1_{12} \in
  \U(\Hils\otimes\Hils[K]^1\otimes\Hils[K]^2).
  \]
  Left representations of~\(\Multunit\)
  also form a Hilbert space tensor category with the obvious forgetful
  functor and \(\Trivial(\Hils)=(\Hils,1)\).
  Actually, this tensor category is \emph{isomorphic} to the category
  of right representations of the dual multiplicative unitary
  \(\DuMultunit= \Sigma \Multunit[*]\Sigma\):
  the isomorphism takes a left representation
  \(\DuCorep{V}\in \U(\Hils[K]\otimes\Hils)\)
  to the right representation
  \(\Sigma \DuCorep{V}^* \Sigma \in \U(\Hils\otimes\Hils[K])\)
  of~\(\DuMultunit\).
  Since~\(\DuMultunit\)
  is a natural right absorber for right representations
  of~\(\DuMultunit\)
  by Example~\ref{exa:Multunit_absorber}, the unitary~\(\Multunit\),
  viewed as a left representation, is a natural \emph{right}
  absorber in the tensor category of left representations
  of~\(\Multunit\).  The natural intertwiner is
  \[
  \Sigma \DuCorep{V}^*\Sigma\colon
  (\Hils[K]\otimes\Hils, \DuCorep{V}\tenscorep\Multunit)
  \to (\Hils[K]\otimes\Hils, 1_{\Hils[K]}\tenscorep\Multunit).
  \]
\end{example}

Next we want to prove that the multiplicative unitaries for two
natural right absorbers of~\(\Cat\) are isomorphic in the category of
multiplicative unitaries introduced in~\cite{Ng:Morph_of_Mult_unit}
and further studied in~\cite{Meyer-Roy-Woronowicz:Homomorphisms}.

\begin{proposition}
  \label{pro:compare_absorbers}
  Let \((\rho,(U^x)_{x\inOb\Cat})\)
  and \((\check{\rho},(\check{U}{}^x)_{x\inOb\Cat})\)
  be two natural right absorbers for \((\Cat,\Forget)\).
  Let \(\Hils\defeq \Hils^\rho\),
  \(\check\Hils\defeq \Hils^{\check\rho}\),
  \(U\defeq U^\rho \in \U(\Hils\otimes\Hils)\),
  \(\check{U}\defeq U^{\check\rho} \in
  \U(\check{\Hils}\otimes\check{\Hils})\)
  be the corresponding multiplicative unitaries.  The unitaries
  \[
  V \defeq U^{\check\rho} \in \U(\check{\Hils} \otimes \Hils),\qquad
  W \defeq \check{U}^\rho \in \U(\Hils \otimes \check{\Hils})
  \]
  satisfy the following pentagon-like equations:
  \begin{alignat*}{2}
    U_{23} V_{12} &= V_{12} V_{13} U_{23},&\qquad
    \check{U}_{23} W_{12} &= W_{12} W_{13} \check{U}_{23},\\
    V_{23} \check{U}_{12} &= \check{U}_{12} V_{13} V_{23},&\qquad
    W_{23} U_{12} &= U_{12} W_{13} W_{23},\\
    V_{23} W_{12} &= W_{12} U_{13} V_{23},&\qquad
    W_{23} V_{12} &= V_{12} \check{U}_{13} W_{23}.
  \end{alignat*}
  If the multiplicative unitaries \(U\)
  and~\(\check{U}\)
  are manageable, then \(V\)
  and~\(W\)
  give morphisms between the corresponding \(\Cst\)\nb-quantum
  groups that are inverse to each other in the category of
  \(\Cst\)\nb-quantum
  groups defined
  in~\textup{\cite{Meyer-Roy-Woronowicz:Homomorphisms}}.
\end{proposition}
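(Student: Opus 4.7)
The plan is to derive the six pentagon-like equations directly from the axioms \ref{en:natural_absorber1} and \ref{en:natural_absorber2} of a natural right absorber, applied symmetrically to $(\rho,U^\bullet)$ and $(\check\rho,\check U^\bullet)$, and then invoke~\cite{Meyer-Roy-Woronowicz:Homomorphisms} to interpret~$V$ and~$W$ as mutually inverse morphisms in the manageable case.

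The top two equations come for free from Proposition~\ref{pro:absorber}. That result says that $x\mapsto U^x$ is a strict tensor functor into $\Corepcat{U}$, so $V = U^{\check\rho}$ is a right representation of~$U$, which is exactly $U_{23}V_{12} = V_{12}V_{13}U_{23}$. Applying the same result to $(\check\rho,\check U^\bullet)$ gives $\check U_{23}W_{12} = W_{12}W_{13}\check U_{23}$.

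The remaining four equations I would derive by applying the naturality axiom~\ref{en:natural_absorber1} of one absorber to an arrow in~$\Cat$ supplied by the other, decomposing the resulting occurrences of $U^{x_1\otimes x_2}$ and $\check U^{x_1\otimes x_2}$ via~\ref{en:natural_absorber2} together with Lemma~\ref{lem:natural_absorber_Trivial}. For the middle-row left equation, take the arrow $\check U = \check U^{\check\rho}\in\Cat(\check\rho\otimes\check\rho,\Trivial(\check\rho)\otimes\check\rho)$ and apply naturality of~$U^\bullet$; \ref{en:natural_absorber2} gives $U^{\check\rho\otimes\check\rho} = V_{13}V_{23}$ and $U^{\Trivial(\check\rho)\otimes\check\rho} = V_{23}$ (the first factor collapses by Lemma~\ref{lem:natural_absorber_Trivial}), so the naturality square reads $V_{23}\check U_{12} = \check U_{12}V_{13}V_{23}$. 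The mirror argument, naturality of~$\check U^\bullet$ applied to $U = U^\rho$, yields $W_{23}U_{12} = U_{12}W_{13}W_{23}$. For the bottom row, naturality of~$U^\bullet$ applied to the arrow $W = \check U^\rho\in\Cat(\rho\otimes\check\rho,\Trivial(\rho)\otimes\check\rho)$ produces $V_{23}W_{12} = W_{12}U_{13}V_{23}$, and naturality of~$\check U^\bullet$ applied to $V = U^{\check\rho}$ produces $W_{23}V_{12} = V_{12}\check U_{13}W_{23}$.

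For the second half I would argue as follows. Under manageability, the pentagon equations in the left column say precisely that $V$ is a bicharacter from~$\check U$ to~$U$ in the sense of~\cite{Meyer-Roy-Woronowicz:Homomorphisms}, and similarly $W$ is a bicharacter from~$U$ to~$\check U$; each such bicharacter determines a morphism of the associated $\Cst$\nb-quantum groups. The two mixed pentagons in the bottom row are the identities that exhibit $V$ and~$W$ as mutual inverses in that category: under the bicharacter composition law they reduce $V\circ W$ and $W\circ V$ to the bicharacters~$U$ and~$\check U$, which represent the identity morphisms. I expect the main obstacle here to be bookkeeping, namely aligning our conventions with the precise definitions of composition and identity morphisms in~\cite{Meyer-Roy-Woronowicz:Homomorphisms}; once that alignment is carried out, the inverse property drops out of the mixed equations with no further analytic input beyond manageability.
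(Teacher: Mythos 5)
Your proposal is correct and follows essentially the same route as the paper: the first two equations from Proposition~\ref{pro:absorber}, the remaining four from naturality~\ref{en:natural_absorber1} applied to the intertwiners $\check U$, $U$, $W$, $V$ together with~\ref{en:natural_absorber2} and Lemma~\ref{lem:natural_absorber_Trivial}, and then the identification of $V$, $W$ as mutually inverse bicharacters via the composition law of~\cite{Meyer-Roy-Woronowicz:Homomorphisms}. The only cosmetic difference is that the paper derives the right-hand column by the symmetry of the hypotheses rather than by writing out the mirror arguments.
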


\begin{proof}
  Our assumptions are symmetric in \((\rho,U)\)
  and \((\check{\rho},\check{U})\).
  When we exchange them, the equations in the first column become the
  corresponding ones in the second column.  So it suffices to prove
  those in the first column.  We already know that
  \(V=U^{\check\rho}\)
  is a right representation of~\(U\),
  which gives the first equation.  The other equations are proved
  similarly.  For the second equation, we use the naturality of~\(U\)
  for the intertwiner
  \(\check{U}\colon \check\rho\otimes \check\rho \to
  \Trivial(\check\rho) \otimes \check\rho\)
  and rewrite
  \(U^{\check\rho\otimes \check\rho} = U^{\check\rho}_{13}
  U^{\check\rho}_{23} = V_{13} V_{23}\)
  and
  \(U^{\Trivial(\check\rho)\otimes \check\rho} = U^{\check\rho}_{23} =
  V_{23}\).
  For the third equation, we use the naturality of~\(U\)
  for the intertwiner
  \(W\colon \rho\otimes \check\rho \to \Trivial(\rho) \otimes
  \check\rho\)
  and rewrite
  \(U^{\rho\otimes \check\rho} = U^{\rho}_{13} U^{\check\rho}_{23} =
  U_{13} V_{23}\)
  and
  \(U^{\Trivial(\check\rho)\otimes \check\rho} = U^{\check\rho}_{23} =
  V_{23}\).

  Morphisms of quantum groups are described in
  \cite{Meyer-Roy-Woronowicz:Homomorphisms}*{Lemma 3.2}.  The
  equations \(U_{23} V_{12} = V_{12} V_{13} U_{23}\)
  and \(V_{23} \check{U}_{12} = \check{U}_{12} V_{13} V_{23}\)
  say that~\(V\)
  is a morphism from~\(\check{U}\)
  to~\(U\).
  The equations
  \(\check{U}_{23} W_{12} = W_{12} W_{13} \check{U}_{23}\)
  and \(W_{23} U_{12} = U_{12} W_{13} W_{23}\)
  say that~\(W\)
  is a morphism from~\(U\)
  to~\(\check{U}\).
  The product of two morphisms is defined
  in~\cite{Meyer-Roy-Woronowicz:Homomorphisms}*{Definition 3.5} as the
  solution of a certain operator equation.  The equation
  \(V_{23} W_{12} = W_{12} U_{13} V_{23}\)
  says that the product of \(V\)
  and~\(W\)
  is~\(U\).
  The equation \(W_{23} V_{12} = V_{12} \check{U}_{13} W_{23}\)
  says that the product of \(W\)
  and~\(V\)
  is~\(\check{U}\).
  Manageability is needed in~\cite{Meyer-Roy-Woronowicz:Homomorphisms}
  to ensure that the equation
  in~\cite{Meyer-Roy-Woronowicz:Homomorphisms}*{Definition 3.5} always
  has a solution.  So manageability is needed to talk about a
  \emph{category} of morphisms between multiplicative unitaries.
\end{proof}

\begin{example}
  Let \((\rho,U)\) be a natural right absorber for~\(\Cat\) and let
  \(y\inOb\Cat\).  Then \(\check\rho = \rho\otimes y\) with
  \(\check{U}{}^x \defeq U^x \otimes \Id_y\) for all \(x\inOb\Cat\)
  is a natural right absorber as well.  The corresponding
  multiplicative unitary is
  \begin{equation}
    \label{eq:stabilise_multunit}
    \check{U}{}^{\rho\otimes y}
    = (U^{\rho\otimes y})^{\phantom1}_{123}
    = U^\rho_{13} U^y_{23}
    \in \U(\Hils^\rho\otimes\Hils^y \otimes \Hils^\rho\otimes\Hils^y).
  \end{equation}
  Proposition~\ref{pro:compare_absorbers} shows that \(U^\rho\)
  and~\(\check{U}{}^{\rho\otimes y}\)
  are isomorphic multiplicative unitaries when they are both
  manageable, compare \cite{Meyer-Roy-Woronowicz:Qgrp_proj}*{Theorem
    3.7}.
\end{example}

We now extend the analysis above to describe functors between
representation categories.
Let \(\Cat_1\)
and~\(\Cat_2\)
be Hilbert space tensor categories with natural right absorbers
\((\rho_1,U_1)\)
and \((\rho_2,U_2)\),
respectively.  Let \(\Phi\colon \Cat_1\to\Cat_2\)
be a strict tensor functor with \(\Forget_2\circ \Phi=\Forget_1\).
If \(\Cat_1\)
and~\(\Cat_2\)
are representation categories, then this means that~\(\Phi\)
turns a representation of one sort into one of the other \emph{on
  the same Hilbert space} in a natural way and preserving tensor
products.
Such a functor also satisfies \(\Phi\circ\Trivial_1 = \Trivial_2\)
by the argument in Remark~\ref{rem:trivial_unique}.

\begin{proposition}
  \label{pro:functor_Corep_morphism}
  The unitary
  \(V^\Phi \defeq U_2^{\Phi(\rho_1)}\in
  \U(\Hils^{\rho_1}\otimes\Hils^{\rho_2})\) satisfies
  \[
  (U_2^{\rho_2})^{\phantom1}_{23} V^\Phi_{12}
  = V^\Phi_{12} V^\Phi_{13} (U_2^{\rho_2})^{\phantom1}_{23},\qquad
  V^\Phi_{23} (U_1^{\rho_1})^{\phantom1}_{12}
  = (U_1^{\rho_1})^{\phantom1}_{12} V^\Phi_{13} V^\Phi_{23},
  \]
  that is, \(V^\Phi\) is a bicharacter from~\(U_1^{\rho_1}\)
  to~\(U_2^{\rho_2}\).  Moreover, for
  any \(x\inOb\Cat_1\),
  \begin{equation}
    \label{eq:functor_Corep_morphism}
    V^\Phi_{23} (U_1^x)^{\phantom1}_{12}
    = (U_1^x)^{\phantom1}_{12} (U_2^{\Phi(x)})^{\phantom1}_{13} V^\Phi_{23}
    \in \U(\Hils^x\otimes\Hils^{\rho_1}\otimes\Hils^{\rho_2}).
  \end{equation}
  If the multiplicative unitary~\(U_1^{\rho_1}\)
  is manageable and \(\Cat_2 \cong \Corepcat{U_2^{\rho_2}}\),
  then the map from functors \(\Phi\colon \Cat_1\to \Cat_2\)
  as above to unitary bicharacters
  \(V\in \U(\Hils^{\rho_1}\otimes\Hils^{\rho_2})\)
  from~\(U_1^{\rho_1}\)
  to~\(U_2^{\rho_2}\)
  is bijective.  If the multiplicative unitaries \(U_1^{\rho_1}\)
  and~\(U_2^{\rho_2}\)
  are both manageable, then \(V^\Phi\)
  is a morphism between the corresponding \(\Cst\)\nb-quantum
  groups in the category defined
  in~\textup{\cite{Meyer-Roy-Woronowicz:Homomorphisms}}.
\end{proposition}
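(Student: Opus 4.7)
The plan is to deduce everything from naturality \ref{en:natural_absorber1} of the absorber $\rho_2$, applied to the image under $\Phi$ of the absorber unitaries $U_1^x$ coming from $\Cat_1$. First I would prove~\eqref{eq:functor_Corep_morphism}. Since $\Phi$ is a strict tensor functor commuting with the forgetful functors, it also satisfies $\Phi\circ\Trivial_1=\Trivial_2$ by the uniqueness argument of Remark~\ref{rem:trivial_unique}, so it sends the arrow $U_1^x\in\Cat_1(x\otimes\rho_1,\Trivial_1(x)\otimes\rho_1)$ to an arrow $\Phi(x)\otimes\Phi(\rho_1)\to\Trivial_2(x)\otimes\Phi(\rho_1)$ in~$\Cat_2$ whose underlying operator is again $U_1^x$. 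The naturality square~\ref{en:natural_absorber1} for this arrow reads
\[
U_2^{\Trivial_2(x)\otimes\Phi(\rho_1)}\cdot (U_1^x)^{\phantom1}_{12}
 = (U_1^x)^{\phantom1}_{12}\cdot U_2^{\Phi(x)\otimes\Phi(\rho_1)}.
\]
By~\ref{en:natural_absorber2} the right-hand absorber factors as $(U_2^{\Phi(x)})^{\phantom1}_{13} V^\Phi_{23}$, while the left-hand one reduces to $V^\Phi_{23}$ because $U_2^{\Trivial_2(x)}=\Id$ by Lemma~\ref{lem:natural_absorber_Trivial}; rearranging yields~\eqref{eq:functor_Corep_morphism}. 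The two bicharacter identities for $V^\Phi$ then follow at once: specialising~\eqref{eq:functor_Corep_morphism} to $x=\rho_1$ gives the one involving $U_1^{\rho_1}$, while the one involving $U_2^{\rho_2}$ merely says that $V^\Phi=U_2^{\Phi(\rho_1)}$ is a right representation of $U_2^{\rho_2}$, which is Proposition~\ref{pro:absorber} applied in $\Cat_2$.

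For the bijectivity, injectivity is immediate from $\Cat_2\cong\Corepcat{U_2^{\rho_2}}$: the object $\Phi(x)$ is recovered from the representation $U_2^{\Phi(x)}$, which~\eqref{eq:functor_Corep_morphism} determines from $V^\Phi$ and $U_1^x$. For surjectivity, given a unitary bicharacter~$V$ from~$U_1^{\rho_1}$ to~$U_2^{\rho_2}$, the same formula forces one to define, for each $x\inOb\Cat_1$, a unitary $A^x$ on $\Hils^x\otimes\Hils^{\rho_2}$ by the requirement
\[
V_{23}(U_1^x)^{\phantom1}_{12}
 = (U_1^x)^{\phantom1}_{12}A^x_{13}V_{23}.
\]
The main obstacle is existence of $A^x$: one must show that the operator $(U_1^x)^*_{12}V_{23}(U_1^x)^{\phantom*}_{12}V^*_{23}$ on $\Hils^x\otimes\Hils^{\rho_1}\otimes\Hils^{\rho_2}$ really sits in legs $1$ and $3$. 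This is where manageability of~$U_1^{\rho_1}$ is used, through the characterisation of bicharacters as morphisms of manageable multiplicative unitaries in~\cite{Meyer-Roy-Woronowicz:Homomorphisms}. Once $A^x$ has been constructed, a calculation of exactly the same shape as the one establishing~\eqref{eq:functor_Corep_morphism} shows that $A^x$ is a right representation of~$U_2^{\rho_2}$, that $x\mapsto A^x$ sends $\Cat_1$-arrows to intertwiners, and, via~\ref{en:natural_absorber2} and uniqueness of $A^{x_1\otimes x_2}$, that this assignment is strictly monoidal. Transporting along $\Cat_2\cong\Corepcat{U_2^{\rho_2}}$ produces a strict tensor functor $\Phi\colon\Cat_1\to\Cat_2$ over $\Hilb$ with $V^\Phi=V$.

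Finally, when both $U_i^{\rho_i}$ are manageable, the category of~\cite{Meyer-Roy-Woronowicz:Homomorphisms} is defined and, by \cite{Meyer-Roy-Woronowicz:Homomorphisms}*{Lemma~3.2}, a unitary bicharacter between two manageable multiplicative unitaries is exactly a morphism of the associated $\Cst$-quantum groups; thus $V^\Phi$ is such a morphism, completing the proof.
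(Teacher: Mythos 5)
Your proposal is correct and matches the paper's own argument: \eqref{eq:functor_Corep_morphism} is obtained from naturality of the absorber in \(\Cat_2\) applied to \(\Phi(U_1^x)\) together with \ref{en:natural_absorber2} and Lemma~\ref{lem:natural_absorber_Trivial}, the bicharacter identities follow by specialising and by Proposition~\ref{pro:absorber}, and injectivity/surjectivity are handled exactly as in the paper. Your appeal to manageability of \(U_1^{\rho_1}\) for the existence of \(A^x\) is the same step the paper delegates to \cite{Meyer-Roy-Woronowicz:Homomorphisms}*{Proposition 6.5 and Lemma 3.6}, so the level of detail is comparable.
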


\begin{proof}
  The first two equations in the proposition say that~\(V^\Phi\)
  is a morphism of \(\Cst\)\nb-quantum
  groups as in \cite{Meyer-Roy-Woronowicz:Homomorphisms}*{Lemma 3.2}
  provided the multiplicative unitaries \(U_1^{\rho_1}\)
  and~\(U_2^{\rho_2}\)
  are manageable, so that they generate \(\Cst\)\nb-quantum
  groups.  We already know that~\(V^\Phi\)
  is a right representation of~\(U_2^{\rho_2}\),
  which is the first equation.  The second equation is the special
  case \(x=\rho_1\)
  of~\eqref{eq:functor_Corep_morphism}.
  Equation~\ref{eq:functor_Corep_morphism} says that the functor on
  representation categories induced by~\(V^\Phi\)
  is~\(\Phi\),
  as expected.  To prove~\eqref{eq:functor_Corep_morphism}, we
  identify
  \begin{alignat*}{2}
    \Phi(x \otimes\rho_1) &= \Phi(x) \otimes\Phi(\rho_1),&\quad
    U_2^{\Phi(x \otimes\rho_1)} &=
    (U_2^{\Phi(x)})^{\phantom1}_{13} V^{\Phi}_{23},\\
    \Phi(\Trivial(x) \otimes\rho_1) &= \Trivial(\Phi(x))
    \otimes\Phi(\rho_1),&\quad
    U_2^{\Phi(\Trivial(x) \otimes\rho_1)} &=
    (U_2^{\Phi(\rho_1)})^{\phantom1}_{23} = V^{\Phi}_{23}.
  \end{alignat*}
  The naturality of~\(U_2\)
  for the intertwiner
  \(\Phi(U_1^x)\colon \Phi(x \otimes\rho_1) \to \Phi(\Trivial(x)
  \otimes\rho_1)\)
  gives~\eqref{eq:functor_Corep_morphism}.  This is equivalent to
  \((U_2^{\Phi(x)})^{\phantom1}_{13} = (U_1^x)_{12}^{*\phantom1}
  V^\Phi_{23} (U_1^x)^{\phantom1}_{12} (V^\Phi_{23})^*\),
  which determines the object~\(\Phi(x)\)
  of~\(\Cat_2\)
  by Proposition~\ref{pro:absorber}.  This describes how~\(\Phi\)
  acts on objects.  Then its action on arrows is determined by the
  faithful forgetful functor to Hilbert spaces.  So~\(V^\Phi\)
  determines the functor~\(\Phi\).

  Now assume that~\(U_1^{\rho_1}\)
  is manageable.  Let \(V\in \U(\Hils^{\rho_1}\otimes\Hils^{\rho_2})\)
  be a bicharacter.  Any bicharacter induces a functor~\(\Phi\)
  between the representation categories by
  \cite{Meyer-Roy-Woronowicz:Homomorphisms}*{Proposition~6.5}.  The
  proof of this proposition does not describe this functor~\(\Phi\)
  explicitly.  An explicit formula for~\(\Phi\)
  is similar to the formula for the composition of bicharacters, which
  is a special case.  Namely, let \(x\inOb\Cat_1\).
  As in the proof of \cite{Meyer-Roy-Woronowicz:Homomorphisms}*{Lemma
    3.6}, manageability shows that there is a unitary
  operator~\(U_2^{\Phi(x)}\)
  that verifies~\eqref{eq:functor_Corep_morphism}; moreover,
  \(U_2^{\Phi(x)}\)
  is a representation of~\(U_2^{\rho_2}\),
  and there is a unique functor
  \(\Phi\colon \Cat_1 \to \Corepcat{U_2^{\rho_2}}\)
  with \(\Forget\circ\Phi=\Forget\)
  that sends \(x\inOb\Cat\)
  to this representation and that acts by the identity map on arrows,
  viewed as Hilbert space operators.  This functor is a strict tensor
  functor.  Any functor~\(\Phi\)
  as above is of this form for the corresponding bicharacter~\(V^\Phi\).
  This gives the desired bijection.
\end{proof}

Proposition~\ref{pro:functor_Corep_morphism} gives yet another
equivalent characterisation of the quantum group morphisms
of~\cite{Meyer-Roy-Woronowicz:Homomorphisms}: they are equivalent to
strict tensor functors between the representation categories with
\(\Forget\circ \Phi = \Forget\).
This result is similar in spirit to
\cite{Meyer-Roy-Woronowicz:Homomorphisms}*{Theorem 6.1}, which uses
coactions on \(\Cst\)\nb-algebras instead of representations.

\subsection{Left and right absorbers}
\label{sec:left_absorbers}

A natural left absorber in~\(\Cat\) is defined like a natural right
absorber, but on the other side:

\begin{definition}
  \label{def:natural_left_absorber}
  A \emph{natural left absorber} in~\(\Cat\) is an object
  \(\lambda\inOb\Cat\) with unitaries
  \[
  U_\lambda^x\colon
  \lambda \otimes x \to \lambda \otimes \Trivial(x)\qquad
  \text{for all }x\inOb\Cat
  \]
  with the following properties:
  \begin{enumerate}[label=\textup{(\ref*{def:natural_left_absorber}.\arabic*)}]
  \item the unitaries~\(U_\lambda^x\)
    are natural, that is, the following diagram commutes for
    any arrow \(a\colon x_1\to x_2\):
    \[
    \begin{tikzpicture}
      \matrix (m) [cd] {
        \lambda \otimes x_1 &
        \lambda \otimes \Trivial(x_1)\\
        \lambda \otimes x_2&
        \lambda \otimes \Trivial(x_2)\\
      };
      \labelar{m-1-1}{U_\lambda^{x_1}}{\cong}{m-1-2};
      \labelar{m-2-1}{U_\lambda^{x_2}}{\cong}{m-2-2};
      \labelar{m-1-1}{}{\Id_\lambda \otimes a}{m-2-1};
      \labelar{m-1-2}{\Id_\lambda \otimes a}{}{m-2-2};
    \end{tikzpicture}
    \]
  \item for all \(x_1,x_2\inOb\Cat\), the following diagram
    commutes:
    \[
    \begin{tikzpicture}
      \matrix (m) [cd,column sep=3em] {
        \lambda \otimes x_1\otimes x_2 &
        \lambda \otimes \Trivial(x_1\otimes x_2)\\
        \lambda \otimes \Trivial(x_1)\otimes x_2&
        \lambda \otimes \Trivial(x_1)\otimes \Trivial(x_2)\\
      };
      \labelar{m-1-1}{U_\lambda^{x_1\otimes x_2}}{}{m-1-2};
      \labelar{m-1-1}{}{(U_\lambda^{x_1})_{12}}{m-2-1};
      \labelar{m-2-1}{(U_\lambda^{x_2})_{13}}{}{m-2-2};
      \draw[double,double equal sign distance] (m-1-2) -- (m-2-2);
    \end{tikzpicture}
    \]
  \end{enumerate}
\end{definition}

The analogue of Lemma~\ref{lem:natural_absorber_Trivial} holds for
natural left absorbers as well, that is,
\(U_\lambda^{\Trivial(\Hils)}= \Id_{\Trivial(\Hils) \otimes\lambda}\)
for any Hilbert space~\(\Hils\).

Let~\(\Multunit\) be a multiplicative unitary.  Then the categories
of left and of right representations of~\(\Multunit\) have a
canonical natural \emph{right} absorber by Examples
\ref{exa:Multunit_absorber} and~\ref{exa:left_corep}.  It is
unclear, in general, whether they have a natural left absorber as
well.  The only construction of left absorbers that we know uses the
contragradient operation to turn a right into a left absorber.  For
contragradients to exist, we assume~\(\Multunit\) to be manageable.
We work with right representations of~\(\Multunit\).  The
contragradient of a representation~\(\Corep{U}\) on
a Hilbert space~\(\Hils\) is a
representation~\(\widetilde{\Corep{U}}\) on the complex-conjugate
Hilbert space~\(\conj{\Hils}\).  The contragradient construction
becomes a covariant functor \(\Corepcat{\Multunit} \to
\Corepcat{\Multunit}\) when we map an intertwiner \(a\colon
\Hils_1\to\Hils_2\) to \(\conj{a}\colon \conj{\Hils_1} \to
\conj{\Hils_2}\).  This is not quite a \(\Wst\)\nb-functor because it
is conjugate-linear, not linear.  The
contragradient of a trivial representation remains trivial.  The
contragradient operation is involutive, that is,
\(\widetilde{\widetilde{\Corep{U}}} = \Corep{U}\) for
representations and \(\conj{\conj{a}} = a\) for
intertwiners.  It reverses the order of tensor factors: the flip
operator \(\Sigma\colon \conj{\Hils_1} \otimes \conj{\Hils_2} \to
\conj{\Hils_2} \otimes \conj{\Hils_1} \cong
\conj{\Hils_2\otimes\Hils_1}\) intertwines \(\widetilde{\Corep{U}_1}
\tenscorep \widetilde{\Corep{U}_2}\) with the contragradient of
\(\Corep{U}_2 \tenscorep \Corep{U}_1\), see
\cite{Soltan-Woronowicz:Multiplicative_unitaries}*{Section~3}.

Let \((\rho,(U^x)_{x\inOb\Corepcat{\Multunit}})\) be a natural right
absorber for~\(\Corepcat{\Multunit}\).  For instance, we may take
the canonical one described in Example~\ref{exa:Multunit_absorber}.
Let \(\lambda \defeq \widetilde\rho\) be the contragradient
of~\(\rho\), so \(\widetilde\lambda=\rho\).  Let \(U_\lambda^x\colon
\lambda \otimes x \to \Trivial(\lambda)\otimes x\) for
\(x\inOb\Corepcat{\Multunit}\) be the composite unitary intertwiner
\[
\lambda \otimes x =
\widetilde{\widetilde{\lambda \otimes x}} \xrightarrow{\Sigma}
\widetilde{\tilde{x} \otimes \tilde{\lambda}}
\xrightarrow{\conj{U^{\tilde{x}}}}
\widetilde{\Trivial(\tilde{x}) \otimes \rho}
=
\widetilde{\widetilde{\Trivial(x}) \otimes \widetilde{\lambda}}
 \xrightarrow{\Sigma}
\widetilde{\widetilde{\lambda \otimes \Trivial(x)}}
=
\lambda \otimes \Trivial(x).
\]
Routine computations show that \((\lambda,(U_\lambda^x))\) is a
natural left absorber if \((\rho,(U^x))\) is a natural right
absorber.  This proves the following:

\begin{proposition}
  \label{pro:left_right_absorber}
  Let~\(\Multunit\) be a multiplicative unitary.  If~\(\Multunit\)
  is manageable, then its tensor category of
  representations~\(\Corepcat{\Multunit}\) contains both a natural
  right absorber and a natural left absorber.
\end{proposition}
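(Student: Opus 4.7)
The plan is to establish the two halves of the proposition separately.

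For the natural right absorber, I would simply invoke Example~\ref{exa:Multunit_absorber}, which exhibits~\(\Multunit\) itself, viewed as the object \((\Hils,\Multunit) \inOb \Corepcat{\Multunit}\), as a natural right absorber, with intertwining unitaries \(U^{(\Hils[K],\Corep{U})} = \Corep{U}\) for each right representation. Manageability plays no role in this half.

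For the natural left absorber, I would take \(\lambda \defeq \widetilde{\rho}\), where~\(\rho\) is the canonical right absorber just described, and define \(U_\lambda^x\) to be the composite displayed immediately before the proposition statement. Manageability enters in two ways: it guarantees that the contragradient functor on \(\Corepcat{\Multunit}\) exists and is involutive, and it ensures that each contragradient intertwiner \(\conj{U^{\tilde{x}}}\) is a genuine unitary intertwiner between objects of the representation category. Without manageability, the contragradient \(\widetilde{\Corep{U}}\) of a representation need not be a representation, so this construction would break down.

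Then I would verify the two axioms of Definition~\ref{def:natural_left_absorber}. For naturality in~\(x\), both the flip and \(U^{\tilde{x}}\) are natural in their arguments---the former by the symmetric braiding on \(\Hilb\), the latter by the right-absorber axiom for~\(\rho\)---and the contragradient, although conjugate-linear, still preserves naturality squares. Composing the pieces in the definition of~\(U_\lambda^x\) then produces the required commuting square. For the tensor product axiom, the essential input is that contragradient reverses the order of tensor factors up to the flip, cf.~\cite{Soltan-Woronowicz:Multiplicative_unitaries}*{Section~3}. Applying this to the known identity \(U^{\tilde{x}_2 \otimes \tilde{x}_1} = U^{\tilde{x}_2}_{13} U^{\tilde{x}_1}_{23}\) and unwinding the two \(\Sigma\)'s in the definition of \(U_\lambda^{x_1 \otimes x_2}\) yields the equation \(U_\lambda^{x_1 \otimes x_2} = (U_\lambda^{x_1})_{12} (U_\lambda^{x_2})_{13}\) on the nose.

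The main obstacle is not conceptual but combinatorial: one must track carefully how the two applications of \(\Sigma\), the two contragradients, and the leg numbering interact, so that the transported axioms for~\(\rho\) end up in the exact form demanded by Definition~\ref{def:natural_left_absorber}. Once this bookkeeping is set up, each step reduces to an identity already established for the right absorber~\(\rho\), transported through the order-reversing involution \(x \mapsto \tilde{x}\).
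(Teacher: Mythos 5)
Your proposal follows the paper's own argument exactly: the natural right absorber is the canonical one from Example~\ref{exa:Multunit_absorber} (no manageability needed), and the natural left absorber is its contragradient \(\lambda=\widetilde\rho\) with the very composite unitaries \(U_\lambda^x\) displayed before the proposition, manageability entering only so that contragradients of representations exist; the verification you sketch is precisely what the paper compresses into ``routine computations''. One small bookkeeping point: in the tensor-product axiom the composite should read \(U_\lambda^{x_1\otimes x_2}=(U_\lambda^{x_2})_{13}(U_\lambda^{x_1})_{12}\) as an operator product (the two factors share the \(\lambda\)-leg, so they need not commute and the order you wrote should be reversed), consistent with the antimultiplicativity relation \(U^\lambda_{12}U^\lambda_{23}=U^\lambda_{23}U^\lambda_{13}U^\lambda_{12}\) recorded later in the paper.
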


If~\(\Cat\) has both a right absorber~\(\rho\) and a left
absorber~\(\lambda\), then
\[
\Trivial(\lambda) \otimes \rho \cong \lambda\otimes\rho
\cong \lambda\otimes\Trivial(\rho).
\]
Hence the direct sums of infinitely many copies of \(\lambda\)
and~\(\rho\) are isomorphic.  This common direct sum is both a left
and a right absorber, and its isomorphism class does not depend on
the choice of \(\lambda\) or~\(\rho\).  These observations go back
already to~\cite{Pinzari-Roberts:Regular}, and they need only
absorption, without any naturality.  We are going to use the
uniqueness of two-sided absorbers to prove that any isomorphism
between the representation categories of two \(\Cst\)\nb-quantum
groups comes from an isomorphism of Hopf \Star{}algebras.  First we
need a preparatory result, which would really belong
into~\cite{Meyer-Roy-Woronowicz:Homomorphisms}, but was not proved
there.

\begin{theorem}
  \label{the:isomorphisms_quantum_groups}
  The isomorphisms in the category of \(\Cst\)\nb-quantum
  groups defined in~\cite{Meyer-Roy-Woronowicz:Homomorphisms} are the
  same as the Hopf \Star{}isomorphisms of the underlying
  \(\Cst\)\nb-bialgebras.
\end{theorem}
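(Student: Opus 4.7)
The plan is to treat the two implications separately. For one direction, any Hopf \Star{}isomorphism \(\pi\colon(A_1,\Delta_1)\to(A_2,\Delta_2)\) of the underlying \(\Cst\)\nb-bialgebras, together with its inverse \(\pi^{-1}\), yields two bicharacters in the category of~\cite{Meyer-Roy-Woronowicz:Homomorphisms} by functoriality of the correspondence between Hopf \Star{}morphisms and bicharacters recalled there; their composition in either order is the identity bicharacter because \(\pi\) and \(\pi^{-1}\) compose to identity Hopf \Star{}morphisms.

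For the converse, let \(V\in\U(\Hils^{\rho_1}\otimes\Hils^{\rho_2})\) and \(W\in\U(\Hils^{\rho_2}\otimes\Hils^{\rho_1})\) be mutually inverse isomorphisms in the category of~\cite{Meyer-Roy-Woronowicz:Homomorphisms}. Proposition~\ref{pro:functor_Corep_morphism} attaches to them strict tensor functors \(\Phi\colon\Corepcat{U_1^{\rho_1}}\to\Corepcat{U_2^{\rho_2}}\) and \(\Psi\colon\Corepcat{U_2^{\rho_2}}\to\Corepcat{U_1^{\rho_1}}\) that preserve underlying Hilbert spaces. The bicharacters associated to \(\Psi\circ\Phi\) and \(\Phi\circ\Psi\) are, by construction, the identity bicharacters of \(U_1^{\rho_1}\) and \(U_2^{\rho_2}\); hence the bijection in Proposition~\ref{pro:functor_Corep_morphism}, whose proof uses manageability, forces \(\Psi\circ\Phi=\Id\) and \(\Phi\circ\Psi=\Id\). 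Thus \(\Phi\) is an isomorphism of Hilbert space tensor categories.

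To convert this categorical isomorphism into a Hopf \Star{}isomorphism, invoke the uniqueness of two-sided absorbers as announced before the theorem. Proposition~\ref{pro:left_right_absorber} provides natural left absorbers \(\lambda_i\) in \(\Corepcat{U_i^{\rho_i}}\); after tensoring each summand with trivial representations so that the Hilbert-space dimensions match, the direct sums \(\omega_i\defeq\rho_i\oplus\lambda_i\) become two-sided absorbers. The Pinzari--Roberts observation recalled in the text shows that any two two-sided absorbers on a common Hilbert space are isomorphic in the ambient category. Amplifying further if necessary so that \(\Phi(\omega_1)\) and \(\omega_2\) sit on a common Hilbert space, one obtains a unitary intertwiner \(T\). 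Via~\eqref{eq:stabilise_multunit}, the multiplicative unitaries \(U_i^{\rho_i}\) are recovered, up to canonical isomorphism of multiplicative unitaries, from the absorber unitaries \(U_i^{\omega_i}\); conjugation by \(T\) identifies these. Since the reduced \(\Cst\)\nb-bialgebras are the norm closures of slices of the multiplicative unitaries and their comultiplications are given by conjugation, a Hopf \Star{}isomorphism follows.

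The hard part is the careful Hilbert-space bookkeeping required to place two-sided absorbers from the two categories on a common Hilbert space, and the verification that the unitary intertwiner implements a Hopf \Star{}isomorphism of the reduced \(\Cst\)\nb-bialgebras rather than merely an abstract algebra isomorphism. Manageability is used repeatedly: for the existence of left absorbers in Proposition~\ref{pro:left_right_absorber}, for the lifting in Proposition~\ref{pro:functor_Corep_morphism}, and for the descent from the level of multiplicative unitaries to that of the reduced \(\Cst\)\nb-bialgebras.
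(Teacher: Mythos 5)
Your first direction and the production of an isomorphism of representation categories from a pair of mutually inverse bicharacters (via Proposition~\ref{pro:functor_Corep_morphism}) are fine and parallel to the paper. The genuine gap is in the last step, where you convert the categorical isomorphism into a Hopf \Star{}isomorphism. The unitary \(T\) you obtain from the uniqueness of two-sided absorbers is an isomorphism of \emph{objects} of the representation category: it intertwines the representations of \(U_2^{\rho_2}\) (equivalently, via \(\Phi^{-1}\), of \(U_1^{\rho_1}\)) carried by the two absorbers. It does \emph{not} intertwine the absorber unitaries themselves, which are extra data not determined by the underlying object. So the claim that ``conjugation by \(T\) identifies'' the amplified multiplicative unitaries \(U_1^{\rho_1}\) and \(U_2^{\rho_2}\), and hence their slice algebras, does not follow; this is exactly the delicate point, not the Hilbert-space bookkeeping. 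The paper's proof avoids this by going through the universal duals: an isomorphism in the category of \cite{Meyer-Roy-Woronowicz:Homomorphisms} already gives a Hopf \Star{}isomorphism \(\hat{C}_1^\univ\cong\hat{C}_2^\univ\), and the object-level isomorphism \((\Multunit_1)^\infty\cong(\Multunit_2)^\infty\) (both isomorphic to the infinite amplification of a natural left absorber) is used only to show that the two representations of \(\hat{C}_1^\univ\cong\hat{C}_2^\univ\) defining the reduced algebras have the same kernel; the universal isomorphism then descends to \(\hat{C}_1\cong\hat{C}_2\) and hence \(C_1\cong C_2\). Your proposal never brings in the universal quantum groups, and without them (or some substitute for the kernel argument) the passage from the unitary \(T\) to a Hopf \Star{}isomorphism of the reduced bialgebras is unsupported.

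A smaller but real slip: \(\omega_i=\rho_i\oplus\lambda_i\) is not a two-sided absorber, even after amplification by trivial representations, because \(x\otimes\lambda_i\) need not trivialise and \(\rho_i\otimes x\) need not either. The correct statement, which the paper uses, is that the absorber unitaries give isomorphisms \(\Trivial(\lambda)\otimes\rho\cong\lambda\otimes\rho\cong\lambda\otimes\Trivial(\rho)\), so the infinite amplifications \(\rho^\infty\) and \(\lambda^\infty\) are isomorphic objects, and it is this common amplification that serves as the two-sided absorber. Finally, your deduction that \(\Psi\circ\Phi=\Id\) from the bijection in Proposition~\ref{pro:functor_Corep_morphism} tacitly uses that the bicharacter of a composite functor is the composite bicharacter; this is true but should be checked (e.g.\ from~\eqref{eq:functor_Corep_morphism}), and in the paper's route it is not needed.
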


\begin{proof}
  It is trivial that a Hopf \Star{}isomorphism induces an isomorphism
  in the category of~\cite{Meyer-Roy-Woronowicz:Homomorphisms}.
  Conversely, an isomorphism between two \(\Cst\)\nb-quantum
  groups \((C_i,\Comult[C_i])\),
  \(i=1,2\),
  in the category of~\cite{Meyer-Roy-Woronowicz:Homomorphisms} only
  gives a Hopf \Star{}isomorphism between their universal dual quantum
  groups \(\hat{C}_1^\univ \cong \hat{C}_2^\univ\)
  (or \(C_1^\univ \cong C_2^\univ\),
  but we shall use the dual isomorphism below).  For locally compact
  quantum groups with Haar weights, an isomorphism
  \(C_1^\univ \cong C_2^\univ\)
  implies a Hopf \Star{}isomorphism between \((C_1,\Comult[C_1])\)
  and \((C_2,\Comult[C_2])\)
  because the invariant weights on \(C_1^\univ \cong C_2^\univ\)
  are unique, see \cite{Kustermans-Vaes:LCQG}*{p.~873}.  We shall
  generalise this to \(\Cst\)\nb-quantum
  groups generated by manageable multiplicative unitaries.
  The Hopf \Star{}isomorphism \(\hat{C}_1^\univ \cong
  \hat{C}_2^\univ\) induces an isomorphism between the
  representation categories of \((C_1,\Comult[C_1])\) and
  \((C_2,\Comult[C_2])\).

  Let \(\Multunit_i\in\U(\Hils_i\otimes\Hils_i)\), \(i=1,2\), be
  manageable multiplicative unitaries that generate
  \((C_i,\Comult[C_i])\).  We view~\(\Multunit_i\) as a right
  representation of \((C_i,\Comult[C_i])\) on~\(\Hils_i\).  The
  representation of~\(\hat{C}_i^\univ\) associated
  to~\(\Multunit_i\) descends to a faithful representation
  of~\(\hat{C}_i\): this is the standard construction of
  \(\hat{C}_i\subseteq \Bound(\Hils_i)\) from a multiplicative
  unitary in~\cite{Soltan-Woronowicz:Multiplicative_unitaries}.
  Thus we have to prove that the representations of
  \(\hat{C}_1^\univ \cong \hat{C}_2^\univ\) associated to
  \(\Multunit_1\) and~\(\Multunit_2\) have the same kernel.  Since
  our multiplicative unitaries are manageable, the representation
  category
  \[
  \Cat \defeq \Corepcat{\Multunit_1}
  \cong \Corepcat{(C_1,\Comult[C_1])}
  \cong \Corepcat{(C_2,\Comult[C_2])}
  \cong \Corepcat{\Multunit_2}
  \]
  contains both a natural left and a natural right absorber by
  Proposition~\ref{pro:left_right_absorber}.  Both \(\Multunit_1\)
  and~\(\Multunit_2\) are natural right absorbers.  By the remarks
  above, the direct sums \((\Multunit_1)^\infty\)
  and~\((\Multunit_2)^\infty\) of infinitely many copies of
  \(\Multunit_1\) and~\(\Multunit_2\) are isomorphic objects
  of~\(\Cat\) because they are both isomorphic to the direct sum of
  infinitely many copies of a left absorber.  Therefore, the
  representations of~\(\hat{C}_1^\univ\) associated to
  \((\Multunit_1)^\infty\) and~\((\Multunit_2)^\infty\) have the same
  kernel.  Then the representations of~\(\hat{C}_1^\univ\)
  associated to \(\Multunit_1\) and~\(\Multunit_2\) also have the
  same kernel.  Thus our Hopf \Star{}isomorphism
  \(\hat{C}_1^\univ\cong \hat{C}_2^\univ\) descends to a Hopf
  \Star{}isomorphism \(\hat{C}_1\cong \hat{C}_2\).  This implies a
  Hopf \Star{}isomorphism \(C_1\cong C_2\).
\end{proof}

\begin{corollary}
  \label{cor:Tannaka-Krein}
  A \(\Cst\)\nb-quantum
  group \((C,\Comult[C])\)
  is determined uniquely by its tensor category
  \(\Corepcat{C,\Comult[C]}\)
  of representations with the forgetful functor to~\(\Hilb\).
\end{corollary}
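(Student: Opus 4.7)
The plan is to deduce this directly from Proposition~\ref{pro:functor_Corep_morphism} and Theorem~\ref{the:isomorphisms_quantum_groups}. Suppose $(C_i,\Comult[C_i])$ for $i=1,2$ are two $\Cst$-quantum groups generated by manageable multiplicative unitaries $\Multunit_i \in \U(\Hils_i \otimes \Hils_i)$, and that we are given a tensor isomorphism $\Phi\colon \Corepcat{C_1,\Comult[C_1]} \to \Corepcat{C_2,\Comult[C_2]}$ with $\Forget_2 \circ \Phi = \Forget_1$, together with an inverse $\Psi$ satisfying $\Forget_1 \circ \Psi = \Forget_2$. Under the standard identification $\Corepcat{C_i,\Comult[C_i]} \cong \Corepcat{\Multunit_i}$, each $\Multunit_i$ is the canonical natural right absorber of Example~\ref{exa:Multunit_absorber}, and the multiplicative unitary it produces is $\Multunit_i$ itself.

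The first step is to apply Proposition~\ref{pro:functor_Corep_morphism} to both $\Phi$ and $\Psi$. Since both multiplicative unitaries are manageable, this yields unitary bicharacters $V^\Phi \in \U(\Hils_1 \otimes \Hils_2)$ and $V^\Psi \in \U(\Hils_2 \otimes \Hils_1)$, each of which is a morphism in the category of $\Cst$-quantum groups of~\cite{Meyer-Roy-Woronowicz:Homomorphisms}.

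The second step is to verify that $V^\Phi$ and $V^\Psi$ are mutually inverse in that category. Observe that the identity functor $\Id_{\Corepcat{\Multunit_i}}$ corresponds under Proposition~\ref{pro:functor_Corep_morphism} to the bicharacter $\Multunit_i = U_i^{\rho_i}$, which represents the identity morphism on $(C_i,\Comult[C_i])$. The bijection of that proposition between strict tensor functors over $\Forget$ and bicharacters is pinned down by the characterising equation~\eqref{eq:functor_Corep_morphism}. The composite functor $\Phi \circ \Psi = \Id_{\Corepcat{\Multunit_2}}$ thus corresponds to the identity morphism on $(C_2,\Comult[C_2])$; on the other hand, a direct check shows that the product of the morphisms $V^\Psi$ and $V^\Phi$ in the sense of~\cite{Meyer-Roy-Woronowicz:Homomorphisms} also satisfies~\eqref{eq:functor_Corep_morphism} for every object, so the uniqueness clause in Proposition~\ref{pro:functor_Corep_morphism} forces the two to coincide. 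Exchanging the roles of $\Phi$ and $\Psi$ gives the symmetric statement, so $V^\Phi$ and $V^\Psi$ are mutually inverse isomorphisms of $\Cst$-quantum groups.

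Finally, Theorem~\ref{the:isomorphisms_quantum_groups} promotes this isomorphism in the category of morphisms to a Hopf $^*$-isomorphism of the underlying $\Cst$-bialgebras, giving $(C_1,\Comult[C_1]) \cong (C_2,\Comult[C_2])$ as claimed. The only non-routine ingredient is the compatibility of the correspondence in Proposition~\ref{pro:functor_Corep_morphism} with composition of functors; this is the step on which I would spend most care, but it follows cleanly from the uniqueness statement there combined with the operator characterisation of the product of morphisms recalled in the proof of Proposition~\ref{pro:compare_absorbers}.
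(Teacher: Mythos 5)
Your argument is correct and in substance very close to the paper's: the paper also reduces to a strict tensor isomorphism over \(\Hilb\), extracts a pair of mutually inverse quantum group morphisms from the absorber machinery, and finishes with Theorem~\ref{the:isomorphisms_quantum_groups}. The difference lies in the middle step: the paper transports \(\Multunit[D]\) along the isomorphism of categories, so that \(\Multunit[C]\) and \(\Multunit[D]\) become two natural right absorbers in one and the same category, and then quotes Proposition~\ref{pro:compare_absorbers}; you keep the two categories separate and apply Proposition~\ref{pro:functor_Corep_morphism} to \(\Phi\) and \(\Psi=\Phi^{-1}\). Your ``direct check'' is indeed direct, and you need less than full compatibility of the bicharacter--functor bijection with composition: instantiating \eqref{eq:functor_Corep_morphism} for \(\Phi\) at the object \(x=\Psi(\rho_2)\) gives \(V^\Phi_{23}V^\Psi_{12} = V^\Psi_{12}\,(U_2^{\rho_2})_{13}\,V^\Phi_{23}\) in \(\U(\Hils_2\otimes\Hils_1\otimes\Hils_2)\), because \(U_1^{\Psi(\rho_2)}=V^\Psi\) and \(\Phi\Psi(\rho_2)=\rho_2\); instantiating it for \(\Psi\) at \(y=\Phi(\rho_1)\) gives the symmetric identity. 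By the operator characterisation of the product in \cite{Meyer-Roy-Woronowicz:Homomorphisms}*{Definition 3.5}, these two equations say precisely that the two composites are the reduced bicharacters \(U_2^{\rho_2}\) and \(U_1^{\rho_1}\), i.e., the identity morphisms --- which is exactly the computation the paper performs inside Proposition~\ref{pro:compare_absorbers}. So the delicate point you flagged is a one-line consequence of \eqref{eq:functor_Corep_morphism} rather than an appeal to functoriality of the correspondence.

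The one piece of the paper's proof you omit is the preliminary normalisation: the corollary is meant to cover any tensor equivalence \(F_0\) whose composition with the forgetful functor is only \emph{naturally unitarily isomorphic} to the forgetful functor. The paper first conjugates the representations by these natural unitaries and uses faithfulness and strictness of \(\Forget\) to replace \(F_0\) by a strict tensor isomorphism \(F\) with \(\Forget\circ F=\Forget\) together with a strict inverse. You assume this normal form from the outset, so either add that short reduction or state explicitly that your hypothesis is a strict isomorphism over \(\Hilb\); with that addition your proof is complete.
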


\begin{proof}
  Assume to begin with that there is an equivalence of tensor
  categories~\(F_0\)
  from \(\Corepcat{C,\Comult[C]}\)
  to \(\Corepcat{D,\Comult[D]}\)
  such that the forgetful functors \(\Forget\circ F_0\)
  and~\(\Forget\)
  to~\(\Hilb\)
  are naturally isomorphic.  This natural isomorphism consists of
  natural unitaries
  \(\Upsilon_{(\Hils,V)}\colon \Forget(F_0(\Hils,V)) \congto \Hils\)
  for all Hilbert spaces~\(\Hils\)
  with a representation~\(V\)
  of~\((C,\Comult[C])\).
  We use~\(\Upsilon_{(\Hils,V)}\)
  on the first leg to transfer the representation~\(F_0(\Hils,V)\)
  of~\((D,\Comult[D])\)
  to the Hilbert space~\(\Hils\).
  This gives another equivalence of tensor categories~\(F\)
  from \(\Corepcat{C,\Comult[C]}\)
  to \(\Corepcat{D,\Comult[D]}\)
  such that the tensor functors \(\Forget\circ F\)
  and~\(\Forget\)
  are equal.  Thus~\(F\)
  turns a representation of~\((C,\Comult[C])\)
  on a Hilbert space~\(\Hils\)
  into a representation of~\((D,\Comult[D])\)
  on the same Hilbert space and maps an intertwiner
  for~\((C,\Comult[C])\)
  to the same operator, now as an intertwiner for~\((D,\Comult[D])\).
  Since the forgetful functor to Hilbert spaces is faithful and
  strict, the functor~\(F\)
  is a strict tensor functor as well.  We may improve the inverse
  equivalence to a strict tensor functor acting identically on objects
  as well.  Thus~\(F\) is an isomorphism of tensor categories.

  Let \(\Multunit[C]\)
  and~\(\Multunit[D]\)
  be manageable multiplicative unitaries that generate
  \((C,\Comult[C])\)
  and \((D,\Comult[D])\).
  A representation of \((C,\Comult[C])\)
  is equivalent to one of~\(\Multunit[C]\)
  on the same Hilbert space.  So
  \(\Corepcat{C,\Comult[C]} = \Corepcat{\Multunit[C]}\).
  Similarly, \(\Corepcat{D,\Comult[D]} = \Corepcat{\Multunit[D]}\).
  So \(\Multunit[C]\)
  and~\(\Multunit[D]\)
  are natural right absorbers in
  \(\Corepcat{C,\Comult[C]} \cong \Corepcat{D,\Comult[D]}\)
  by Example~\ref{exa:Multunit_absorber}.  By
  Proposition~\ref{pro:compare_absorbers}, the multiplicative
  unitaries \(\Multunit[C]\)
  and~\(\Multunit[D]\)
  are isomorphic in the category
  of~\cite{Meyer-Roy-Woronowicz:Homomorphisms}.
  Theorem~\ref{the:isomorphisms_quantum_groups} shows that this
  isomorphism gives a Hopf \Star{}isomorphism
  \((C,\Comult[C]) \cong (D,\Comult[D])\).
\end{proof}

Proposition~\ref{pro:absorber} has a variant for natural left
absorbers.  Let \(\lambda\)
and~\((U^x)_{x\inOb\Cat}\)
be a natural left absorber for~\(\Cat\).
For \(x\inOb\Cat\),
let \(\Hils^x \defeq \Forget(x)\),
and write~\(U^x\)
for \(\Forget(U^x)\in \U(\Hils^\lambda\otimes\Hils^x)\).
Then~\(U^\lambda\) is an ``antimultiplicative'' unitary:
\[
U^\lambda_{12}U^\lambda_{23}
= U^\lambda_{23}U^\lambda_{13}U^\lambda_{12}.
\]
Moreover, \(U^x\)
for \(x\inOb\Cat\) is a left representation of~\(U^\lambda\):
\[
U^{x}_{23}U^{x}_{13}U^\lambda_{12}
= U^\lambda_{12}U^x_{23}.
\]
We define a tensor product for representations of~\(U^\lambda\) by
\[
U \tenscorep V \defeq V_{13}U_{12}.
\]
The map \(x\mapsto U^x\)
gives a fully faithful, strict tensor functor from~\(\Cat\)
to~\(\Corepcat{U^\lambda}\),
which intertwines the forgetful functors from \(\Cat\)
and \(\Corepcat{U^\lambda}\) to~\(\Hilb\).

Similarly, there is an analogue of
Proposition~\ref{pro:compare_absorbers}, saying that the
antimultiplicative unitaries \(\Hils\defeq \Hils^\lambda\),
\(\check\Hils\defeq \Hils^{\check\lambda}\),
\(U\defeq U^\lambda\),
\(\check{U}\defeq \check{U}^{\check\lambda}\)
associated to two natural left absorbers
\((\lambda, (U^x)_{x\inOb\Cat})\)
and \((\check{\lambda},(\check{U}^x)_{x\inOb\Cat})\)
are ``isomorphic'' in a suitable sense.  Namely, the unitaries
\[
V \defeq U^{\check\lambda} \in \U(\Hils\otimes\check{\Hils}),\qquad
W \defeq \check{U}^\lambda \in \U(\check{\Hils}\otimes\Hils)
\]
satisfy the following pentagon-like equations:
\begin{alignat*}{2}
  U_{12} V_{23} &= V_{23} V_{13} U_{12},&\qquad
  \check{U}_{12} W_{23} &= W_{23} W_{13} \check{U}_{12},\\
  V_{12} \check{U}_{23} &= \check{U}_{23} V_{13} V_{12},&\qquad
  W_{12} U_{23} &= U_{23} W_{13} W_{12},\\
  V_{12} W_{23} &= W_{23} U_{13} V_{12},&\qquad
  W_{12} V_{23} &= V_{23} \check{U}_{13} W_{12}.
\end{alignat*}
It is also interesting to apply the same technique to a tensor
category with a natural right absorber \((\rho,(U^x)_{x\inOb\Cat})\)
and a natural left absorber
\((\lambda,(\check{U}^x)_{x\inOb\Cat})\).
Let \(\Hils\defeq \Hils^\rho\),
\(\check\Hils\defeq \Hils^{\lambda}\),
\(U\defeq U^\rho\),
\(\check{U}\defeq \check{U}^{\lambda}\)
be the associated multiplicative and antimultiplicative unitaries.  Define
\[
V \defeq U^{\lambda} \in \U(\check{\Hils}\otimes\Hils),\qquad
W \defeq \check{U}^\rho \in \U(\check{\Hils}\otimes\Hils).
\]
These unitaries satisfy the following pentagon-like equations:
\begin{alignat*}{2}
  \check{U}_{12} V_{23} &= V_{23} V_{13} \check{U}_{12},&\qquad
  \check{U}_{12} W_{23} &= W_{23} W_{13} \check{U}_{12},\\
  U_{23} V_{12} &=  V_{12} V_{13} U_{23},&\qquad
  U_{23} W_{12}  &= W_{12} W_{13} U_{23},\\
  V_{13} W_{12} &= W_{12} V_{13} U_{23},&\qquad
  W_{13} V_{23} &= V_{23} W_{13}\check{U}_{13}.
\end{alignat*}
The proofs are similar to those in
Proposition~\ref{pro:compare_absorbers}.  In addition, let~\(x\)
be any object of~\(\Cat\).
Naturality of~\(\check{U}\)
with respect to the intertwiner
\(U^x\colon x\otimes \rho \to \Trivial(x)\otimes\rho\) gives
\begin{equation}
  \label{eq:rep_to_anti-rep}
  \check{U}^x_{12}
  = (\check{U}^\rho_{13})^* (U^x_{23})^* \check{U}^\rho_{13} U^x_{23}
  = W_{13}^* (U^x_{23})^* W_{13}^{\phantom{*}} U^x_{23}.
\end{equation}
Naturality of~\(U\)
with respect to the intertwiner
\(\check{U}^x\colon \lambda\otimes x \to \lambda\otimes\Trivial(x)\)
gives
\begin{equation}
  \label{eq:anti-rep_to_rep}
  U^x_{23}
  = (U^\lambda_{23})^* (\check{U}^x_{12})^* U^\lambda_{23} \check{U}^x_{12}
  = V_{23}^* (\check{U}^x_{12})^* V_{23}^{\phantom{*}} \check{U}^x_{12}.
\end{equation}
Here \(U^x\)
and~\(\check{U}^x\)
are the representations of \(U\)
and~\(\check{U}\)
associated to~\(x\),
respectively.  So these determine each other.  If
\(\Cat=\Corepcat{U}\)
for a manageable multiplicative unitary~\(U\)
and~\(\check{U}\)
comes from its contragradient as above, then also
\(\Cat=\Corepcat{\check{U}}\).
So for a given representation~\(U^x\)
of~\(U\),
there is a unique representation~\(\check{U}^x\)
of~\(\check{U}\)
satisfying~\eqref{eq:rep_to_anti-rep}.  And for a given
representation~\(\check{U}^x\)
of~\(\check{U}\),
there is a unique representation~\(U^x\)
of~\(U\) satisfying~\eqref{eq:anti-rep_to_rep}.

Multiplicative and antimultiplicative unitaries are closely related to
the Heisenberg and anti-Heisenberg pairs studied
in~\cite{Meyer-Roy-Woronowicz:Twisted_tensor}.  By definition, a
Heisenberg pair for a \(\Cst\)\nb-quantum
group \((C,\Comult[C])\)
is a pair of representations \((\pi,\hat\pi)\)
of \((C,\hat{C})\)
such that \((\hat{\pi}\otimes\pi)\multunit\)
for the reduced bicharacter \(\multunit\in\U(\hat{C}\otimes C)\)
is a multiplicative unitary.  And an anti-Heisenberg pair is a pair of
representations \((\sigma,\hat\sigma)\)
of \((C,\hat{C})\)
such that \((\hat{\sigma}\otimes\sigma)\multunit\)
is an antimultiplicative unitary.

\section{Representations of braided multiplicative unitaries}
\label{sec:corep_braided}

Let \(\Hils\)
and~\(\Hils[L]\)
be Hilbert spaces and let \(\Multunit \in \U(\Hils\otimes\Hils)\)
be a multiplicative unitary.  Let
\[
\Corep{U}\in\U(\Hils[L]\otimes\Hils),\qquad
\DuCorep{V}\in\U(\Hils\otimes\Hils[L]),\qquad
\BrMultunit\in\U(\Hils[L]\otimes\Hils[L]),
\]
be a braided multiplicative unitary over~\(\Multunit\)
(see~\cite{Meyer-Roy-Woronowicz:Qgrp_proj}).  We are first going to
define a tensor category
\(\Corepcat{\Multunit,\Corep{U},\DuCorep{V},\BrMultunit}\)
of right representations.

\begin{definition}
  \label{def:braided_corep}
  A (right) \emph{representation} of
  \((\Multunit,\Corep{U},\DuCorep{V},\BrMultunit)\)
  is a triple \((\Hils[K],\Corep{S},\Corep{T})\),
  where~\(\Hils[K]\)
  is a Hilbert space, \(\Corep{S}\in \U(\Hils[K]\otimes\Hils)\)
  is a right representation of~\(\Multunit\)
  on~\(\Hils[K]\), that is,
  \begin{equation}
    \label{eq:S_corep}
    \Multunit_{23} \Corep{S}_{12}
    = \Corep{S}_{12} \Corep{S}_{13} \Multunit_{23}
    \quad\text{in }\U(\Hils[K]\otimes\Hils\otimes\Hils),
  \end{equation}
  and \(\Corep{T}\in\U(\Hils[K]\otimes\Hils[L])\)
  is equivariant with respect to the tensor product representation
  \(\Corep{S}\tenscorep\Corep{U}\) of~\(\Multunit\),
  \begin{equation}
    \label{eq:T_SU-invariant}
    \Corep{S}_{13} \Corep{U}_{23} \Corep{T}_{12}
    = \Corep{T}_{12} \Corep{S}_{13} \Corep{U}_{23}
    \quad\text{in }\U(\Hils[K]\otimes\Hils[L]\otimes \Hils),
  \end{equation}
  and satisfies the (top-braided) representation condition
  \begin{equation}
    \label{eq:T_corep}
    \BrMultunit_{23} \Corep{T}_{12}
    = \Corep{T}_{12} (\Braiding{\Hils[L]}{\Hils[L]})^{\phantom1}_{23}
    \Corep{T}_{12} (\Dualbraiding{\Hils[L]}{\Hils[L]})^{\phantom1}_{23}
    \BrMultunit_{23}
    \quad\text{in }\U(\Hils[K]\otimes\Hils[L]\otimes\Hils[L]).
  \end{equation}
\end{definition}

We recall how the braiding operators
\(\Braiding{\Hils[L]}{\Hils[K]}\colon \Hils[L]\otimes\Hils[K]
\to\Hils[K]\otimes\Hils[L]\)
are defined, where~\(\Hils[K]\)
carries a representation \(\Corep{S}\in \U(\Hils[K]\otimes\Hils)\)
of~\(\Multunit\).
Namely, \(\Braiding{\Hils[L]}{\Hils[K]} \defeq Z\Flip\)
for the unique \(Z\in\U(\Hils[K]\otimes\Hils[L])\) with
\begin{equation}
  \label{eq:braid-Z-def}
  Z_{13} = \DuCorep{V}_{23} (\Corep{S}_{12})^* \DuCorep{V}_{23}^*
  \Corep{S}_{12}
  \qquad \text{in }\U(\Hils[K]\otimes \Hils\otimes \Hils[L]).
\end{equation}
The braiding in~\eqref{eq:T_corep} is the same as in the top-braided
pentagon equation for~\(\BrMultunit\).
Hence \((\Hils[L],\Corep{U},\BrMultunit)\)
is an example of such a right representation.

A morphism
\((\Hils[K]^1,\Corep{S}^1,\Corep{T}^1)\to
(\Hils[K]^2,\Corep{S}^2,\Corep{T}^2)\)
is a bounded operator \(a\colon \Hils[K]^1\to\Hils[K]^2\)
that intertwines both representations, that is,
\(a_1\circ \Corep{S}^1 = \Corep{S}^2\circ a_1\)
and \(a_1\circ \Corep{T}^1 = \Corep{T}^2\circ a_1\).
This turns the representations
of~\((\Multunit,\Corep{U},\DuCorep{V},\BrMultunit)\)
into a \(\Wst\)\nb-category
\(\Corepcat{\Multunit,\Corep{U},\DuCorep{V},\BrMultunit}\).
Forgetting both representations \(\Corep{S}\)
and~\(\Corep{T}\)
gives the forgetful functor to Hilbert spaces.  The functor
\(\Trivial\)
maps \(\Hils[K]\mapsto (\Hils[K],1,1)\).
If the identity map on~\(\Hils[K]\)
is an intertwiner
\((\Hils[K],\Corep{S}^1,\Corep{T}^1) \to
(\Hils[K],\Corep{S}^2,\Corep{T}^2)\),
then \(\Corep{S}^1=\Corep{S}^2\)
and \(\Corep{T}^1=\Corep{T}^2\).
So Assumption~\ref{assum:identity_arrow_equal} is satisfied.

We define a tensor product operation~\(\tenscorep\)
on \(\Corepcat{\Multunit,\Corep{U},\DuCorep{V},\BrMultunit}\) by
\[
(\Hils[K]^1,\Corep{S}^1,\Corep{T}^1) \tenscorep
(\Hils[K]^2,\Corep{S}^2,\Corep{T}^2)
\defeq
(\Hils[K]^1\otimes \Hils[K]^2,
\Corep{S}^1 \tenscorep \Corep{S}^2,
\Corep{T}^1 \tenscorep \Corep{T}^2)
\]
with
\begin{align*}
  \Corep{S}^1 \tenscorep \Corep{S}^2 &= \Corep{S}^1_{13}\Corep{S}^2_{23}
  \in \U(\Hils[K]^1\otimes\Hils[K]^2\otimes\Hils),\\
  \Corep{T}^1 \tenscorep \Corep{T}^2 &=
  (\Braiding{\Hils[L]}{\Hils[K]^2})^{\phantom1}_{23}
  \Corep{T}^1_{12} (\Dualbraiding{\Hils[K]^2}{\Hils[L]})^{\phantom1}_{23}
  \Corep{T}^2_{23}
  \in \U(\Hils[K]^1\otimes\Hils[K]^2\otimes\Hils[L]).
\end{align*}
The braiding operators \(\Braiding{\Hils[L]}{\Hils[K]^2}_{23}\)
and \(\Dualbraiding{\Hils[K]^2}{\Hils[L]}\)
use only the representations \(\Corep{S}\)
on~\(\Hils[K]^2\)
and~\(\DuCorep{V}\)
on~\(\Hils[L]\)
and therefore make sense.  In contrast,
\(\Braiding{\Hils[K]^2}{\Hils[L]}\)
and \(\Dualbraiding{\Hils[L]}{\Hils[K]^2}\)
would be defined if we had a left representation of~\(\Multunit\)
on~\(\Hils[K]^2\) instead of a right one.

\begin{lemma}
  \label{lem:corep_braided}
  The above definitions turn
  \(\Corepcat{\Multunit,\Corep{U},\DuCorep{V},\BrMultunit}\)
  into a Hilbert space tensor category.
\end{lemma}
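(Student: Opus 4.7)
The plan is to go through the axioms of a Hilbert space tensor category (Definition \ref{def:natural_absorber}'s setup) one by one, since most of the structure is already spelled out in the statement and only the coherence of the tensor product really requires work. The forgetful functor $\Forget$ is faithful by construction, $\Trivial(\Hils[K]) \defeq (\Hils[K],1,1)$ clearly satisfies $\Forget\circ\Trivial = \Id_{\Hilb}$, and Assumption \ref{assum:identity_arrow_equal} is immediate because $\Corep{S}$ and $\Corep{T}$ are among the data and an identity-intertwining relation forces them to agree. So the content is to check that (i) the tensor product of two representations is a representation, (ii) the tensor product is associative with unit $\Trivial(\C)$, and $\Forget$, $\Trivial$ are strict monoidal, and (iii) Assumption \ref{assum:arrow_cancellative} holds.

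For (i) I would treat the three defining conditions separately. Equation \eqref{eq:S_corep} for $\Corep{S}^1\tenscorep \Corep{S}^2 = \Corep{S}^1_{13}\Corep{S}^2_{23}$ is the standard calculation recalled in Example \ref{exa:Corepcat}. For the equivariance condition \eqref{eq:T_SU-invariant}, I would expand the definition of $\Corep{T}^1\tenscorep\Corep{T}^2$, use the equivariance of each $\Corep{T}^i$ individually, and combine this with the fact that the braidings $\Braiding{\Hils[L]}{\Hils[K]^2}$ and $\Dualbraiding{\Hils[K]^2}{\Hils[L]}$ are themselves intertwiners between $\Corep{U}\tenscorep \Corep{S}^2$ and $\Corep{S}^2\tenscorep \Corep{U}$; that naturality is exactly the content of the defining formula \eqref{eq:braid-Z-def} for the braiding. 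The hardest step is the top-braided representation equation \eqref{eq:T_corep} for $\Corep{T}^1\tenscorep\Corep{T}^2$: here one has to expand both sides in terms of $\Corep{T}^1_{12}$, $\Corep{T}^2_{23}$, the braidings produced from $\DuCorep{V}$ and $\Corep{S}^2$, and $\BrMultunit$, and then invoke successively the braided pentagon equation for $\BrMultunit$, the compatibilities of $\Corep{U}$ and $\DuCorep{V}$ with $\BrMultunit$, and the Yang--Baxter-type relations linking $\Corep{U}$, $\DuCorep{V}$ and $\Multunit$ from \cite{Meyer-Roy-Woronowicz:Qgrp_proj}. This is where all seven axioms of a braided multiplicative unitary enter, and it is the main obstacle.

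For (ii), associativity for $\Corep{S}$ is leg-numbering; associativity for $\Corep{T}$ reduces, after unfolding the definition, to the hexagon identities satisfied by the braidings $\Braiding{\Hils[L]}{\Hils[K]}$, which in turn follow from naturality of $Z$ in \eqref{eq:braid-Z-def} under tensor products of representations. The unit behaviour is immediate: when either factor is $\Trivial(\Hils)=(\Hils,1,1)$, the corresponding braidings reduce to ordinary flips and $\Corep{T}^1\tenscorep\Corep{T}^2$ collapses to $\Corep{T}^i$ tensored with an identity, and strictness of $\Forget$ and $\Trivial$ on associators and unit transformations is then visible from these collapses.

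For (iii), suppose $a\otimes \Id$ intertwines the tensor product representations on $\Hils[K]^1\otimes\Hils[K]^2\otimes Z$ (the case $\Id\otimes a$ being symmetric). On the $\Corep{S}$-level the argument of Example \ref{exa:Corepcat} already forces $a$ to intertwine $\Corep{S}$. Since the braidings $\Braiding{\Hils[L]}{\Hils[K]}$ depend only on $\Corep{S}$ and $\DuCorep{V}$, once $a$ is known to intertwine the $\Corep{S}$'s, it is automatic that $a\otimes\Id$ commutes with the braiding factors appearing in $\Corep{T}^1\tenscorep\Corep{T}^2$; the remaining piece of the intertwining equation then reduces to $a$ intertwining $\Corep{T}$, which is what we want. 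Thus the main obstacle is really the one computation in \eqref{eq:T_corep}, and everything else follows from standard manipulations with the braiding operators.
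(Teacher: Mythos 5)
You have correctly identified where the difficulty lies, but at exactly that point the proposal stops being a proof: the verification of \eqref{eq:T_corep} (and, to a lesser extent, \eqref{eq:T_SU-invariant}) for \(\Corep{T}^1\tenscorep\Corep{T}^2\) is only announced as ``expand both sides and invoke successively the braided pentagon equation for~\(\BrMultunit\), the compatibilities of \(\Corep{U}\) and \(\DuCorep{V}\) with~\(\BrMultunit\), and the Yang--Baxter-type relations,'' without the computation being carried out. This is the central content of the lemma, and leaving it as a sketch is a genuine gap. Moreover, the ingredient list is off: closure of the representations under~\(\tenscorep\) should not need the top-braided pentagon for~\(\BrMultunit\) at all (just as in Example~\ref{exa:Corepcat} the pentagon for~\(\Multunit\) is not used to show that tensor products of representations are representations), and ``all seven axioms'' is what the direct proof that the \emph{semidirect product is multiplicative} consumes -- a different and harder statement. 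So it is not clear that the brute-force expansion you gesture at would close with the tools you name, and you have not demonstrated that it does.

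The paper avoids this computation by a reformulation you do not use: a pair \((\Corep{S},\Corep{T})\) is a braided representation if and only if \(\Corep{T}\) is an intertwiner from \((\Corep{S}\tenscorep\Corep{U},\,\Corep{T}\tenscorep\BrMultunit)\) to \((\Corep{S}\tenscorep\Corep{U},\,1\tenscorep\BrMultunit)\); with this, the claim that \(\Corep{T}^1\tenscorep\Corep{T}^2\) is again a representation becomes a composition of four known intertwiners (the conjugated copy of \(\Corep{T}^2\), the braiding \eqref{eq:braiding_intertwiner}, \(\Corep{T}^1\), and the braiding back), using only the associativity formula \eqref{eq:triple_tensor}, equivariance, and naturality of the braidings built from \eqref{eq:braid-Z-def}. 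Note also the paper's order of argument: associativity of~\(\tenscorep\) (via \(Z^{12}=Z^2_{23}Z^1_{13}\), which is essentially your hexagon remark) is established \emph{first}, for arbitrary \(\Multunit\)-equivariant unitaries, precisely so that \eqref{eq:triple_tensor} is available in the closure argument. Your treatment of the unit, of Assumption~\ref{assum:identity_arrow_equal}, and of Assumption~\ref{assum:arrow_cancellative} is fine in spirit (for the \(\Id\otimes a\) case you do need \(a\) to intertwine \(\Corep{S}^2\) before you can move it past the braidings, which your naturality remark covers), but to make the proposal a proof you must either carry out the direct expansion of \eqref{eq:T_corep} in full or adopt an argument of the intertwiner-composition type.
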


\begin{proof}
  First, we ought to check that the tensor product above is
  well-defined, that is, gives representations again.  We check
  associativity of the tensor product first because we want to use it
  to prove that the tensor product is again a representation.  Let
  \(\Corep{S}^i \in \U(\Hils[K]^i\otimes\Hils)\)
  and \(\Corep{T}^i \in \U(\Hils[K]^i\otimes\Hils[L])\)
  for \(i=1,2,3\)
  be corepresentations of
  \((\Multunit,\Corep{U},\DuCorep{V},\BrMultunit)\).
  The definition of
  \(\Corep{T}\tenscorep \Corep{T}'\)
  makes sense for any \(\Multunit\)\nb-equivariant
  unitary operators \(\Corep{T},\Corep{T}'\).
  Thus both
  \((\Corep{T}^1\tenscorep \Corep{T}^2)\tenscorep \Corep{T}^3\)
  and \(\Corep{T}^1\tenscorep (\Corep{T}^2\tenscorep \Corep{T}^3)\)
  are defined even if we do not yet know that
  \(\Corep{T}^1\tenscorep \Corep{T}^2\)
  and \(\Corep{T}^2\tenscorep \Corep{T}^3\)
  give representations again.  We claim that both
  \((\Corep{T}^1\tenscorep \Corep{T}^2)\tenscorep \Corep{T}^3\)
  and \(\Corep{T}^1\tenscorep (\Corep{T}^2\tenscorep \Corep{T}^3)\)
  are equal to the \(\Multunit\)\nb-equivariant unitary
  \begin{equation}
    \label{eq:triple_tensor}
    (\Braiding{\Hils[L]}{\Hils[K]^2 \otimes \Hils[K]^3})_{234}
    \Corep{T}^1_{12}
    (\Dualbraiding{\Hils[K]^2 \otimes\Hils[K]^3}{\Hils[L]})^{\phantom1}_{234}
    (\Braiding{\Hils[L]}{\Hils[K]^3})_{34}
    \Corep{T}^2_{23}
    (\Dualbraiding{\Hils[K]^3}{\Hils[L]})^{\phantom1}_{34}
    \Corep{T}^3_{34}
  \end{equation}
  in \(\U(\Hils[K]^1 \otimes \Hils[K]^2 \otimes \Hils[K]^3 \otimes
  \Hils[L])\).  The operators
  \(\Braiding{\Hils[L]}{\Hils[K]^i}\colon \Hils[L]\otimes\Hils[K]^i
  \to\Hils[K]^i\otimes\Hils[L]\) are defined by
  \(\Braiding{\Hils[L]}{\Hils[K]^i} \defeq Z^i\Flip\), where
  \(Z^i\in\U(\Hils[K]^i\otimes\Hils[L])\)
  satisfies
  \begin{equation}
    \label{eq:braid-Z-def-1}
    Z^i_{13} = \DuCorep{V}_{23} (\Corep{S}^i_{12})^* \DuCorep{V}_{23}^*
    \Corep{S}^i_{12}
    \qquad \text{in }\U(\Hils[K]^i\otimes \Hils\otimes \Hils[L])
  \end{equation}
  for \(i=1,2,3\).  And
  \(\Braiding{\Hils[L]}{\Hils[K]^1\otimes\Hils[K]^2} =
  Z^{12}\Flip_{23}\Flip_{12}\), where
  \(Z^{12}\in\U(\Hils[K]^1\otimes\Hils[K]^2\otimes\Hils[L])\)
  satisfies
  \begin{equation}
    \label{eq:braid-Z-def-2}
    Z^{12}_{124} = \DuCorep{V}_{34} (\Corep{S}^1\tenscorep\Corep{S}^2)^*_{123}
    \DuCorep{V}_{34}^* (\Corep{S}^1\tenscorep\Corep{S}^2)_{123}
    \qquad \text{in }\U(\Hils[K]^1\otimes\Hils[K]^2\otimes
    \Hils\otimes \Hils[L]).
  \end{equation}
  This equation gives \(Z^{12}=Z^2_{23}Z^1_{13}\) when we plug in
  the definition of~\(\tenscorep\) and eliminate \(\Corep{S}^1\),
  \(\Corep{S}^2\) and~\(\DuCorep{V}\) using~\eqref{eq:braid-Z-def-1}.
  Therefore,
  \[
  \Braiding{\Hils[L]}{\Hils[K]^1\otimes\Hils[K]^2}
  = Z^{12}\Flip_{23}\Flip_{12}
  = Z^2_{23}Z^1_{13}\Flip_{23}\Flip_{12}
  = Z^2_{23}\Flip_{23}Z^1_{12}\Flip_{12}
  = \Braiding{\Hils[L]}{\Hils[K]^2}_{23}\Braiding{\Hils[L]}{\Hils[K]^1}_{12}.
  \]
  Similarly, \(\Braiding{\Hils[L]}{\Hils[K]^2\otimes\Hils[K]^3} =
  \Braiding{\Hils[L]}{\Hils[K]^3}_{23}\Braiding{\Hils[L]}{\Hils[K]^2}_{12}\).
  Now \(\Corep{T}^1\tenscorep
  (\Corep{T}^2\tenscorep\Corep{T}^3)\) and \((\Corep{T}^1\tenscorep
  \Corep{T}^2)\tenscorep\Corep{T}^3\) and the expression
  in~\eqref{eq:triple_tensor} are equal because they all simplify to
  \[
  \Braiding{\Hils[L]}{\Hils[K]^3}_{34}\Braiding{\Hils[L]}{\Hils[K]^2}_{23}
  \Corep{T}^1_{12}\Dualbraiding{\Hils[K]^2}{\Hils[L]}_{23}\Corep{T}^2_{23}
  \Dualbraiding{\Hils[K]^3}{\Hils[L]}_{34}\Corep{T}^3_{34}.
  \]

  Next, we check that the tensor product of two representations is
  again a representation.  The proof will also help to construct a
  natural right absorber later.  We claim that an operator
  \(\Corep{T}\in\U(\Hils[K]\otimes\Hils[L])\)
  together with \((\Hils[K],\Corep{S})\inOb\Corepcat{\Multunit}\)
  gives a representation if and only if~\(\Corep{T}\) is an intertwiner
  \[
  (\Hils[K] \otimes \Hils[L],
  \Corep{S}\tenscorep\Corep{U},
  \Corep{T}\tenscorep\Corep{F}) \to
  (\Hils[K] \otimes \Hils[L],
  \Corep{S}\tenscorep\Corep{U},
  1\tenscorep\Corep{F}).
  \]
  Indeed, being such an intertwiner means being equivariant with
  respect to \(\Corep{S}\tenscorep\Corep{U}\)
  and intertwining
  \(\Corep{T}\tenscorep\Corep{F} =
  (\Braiding{\Hils[L]}{\Hils[K]^2})_{23} \Corep{T}_{12}
  (\Dualbraiding{\Hils[K]^2}{\Hils[L]})^{\phantom1}_{23}
  \Corep{F}_{23}\)
  with \(1\tenscorep\Corep{F} = \Corep{F}_{23}\).  The latter is
  exactly our representation condition.  Assume that
  \(\Corep{T}^1\in\U(\Hils[K]^1\otimes\Hils[L])\)
  and \(\Corep{T}^2\in\U(\Hils[K]^2\otimes\Hils[L])\) are braided
  representations.
  Since~\(\Corep{T}^2_{23}\)
  is equivariant, when we conjugate it with the braiding operator
  \((\Braiding{\Hils[L]}{\Hils[K]^2 \otimes \Hils[L]})_{234}\)
  on \(\Hils[K]^1\otimes\Hils[K]^2\otimes\Hils[L]\otimes\Hils[L]\),
  then we merely transfer it to~\(\Corep{T}^2_{34}\),
  which commutes with~\(\Corep{T}^1_{12}\).
  Thus~\eqref{eq:triple_tensor} shows that~\(\Corep{T}^2_{23}\)
  is also an intertwiner
  \[
  (\Hils[K]^1 \otimes \Hils[K]^2 \otimes \Hils[L],
  \Corep{S}^1\tenscorep \Corep{S}^2\tenscorep \Corep{U},
  \Corep{T}^1\tenscorep \Corep{T}^2 \tenscorep \Corep{F}) \to
  (\Hils[K]^1 \otimes \Hils[K]^2 \otimes \Hils[L],
  \Corep{S}^1\tenscorep \Corep{S}^2\tenscorep \Corep{U},
  \Corep{T}^1\tenscorep 1 \tenscorep \Corep{F}).
  \]
  Similarly, the braiding operator
  \(\Dualbraiding{\Hils[K]^2}{\Hils[L]}\) gives an intertwiner
  \begin{equation}
    \label{eq:braiding_intertwiner}
    (\Hils[K]^1 \otimes \Hils[K]^2 \otimes \Hils[L],
    \Corep{S}^1\tenscorep \Corep{S}^2\tenscorep \Corep{U},
    \Corep{T}^1\tenscorep 1 \tenscorep \Corep{F})
    \xrightarrow{\Dualbraiding{\Hils[K]^2}{\Hils[L]}_{23}}
    (\Hils[K]^1 \otimes \Hils[L] \otimes \Hils[K]^2,
    \Corep{S}^1\tenscorep \Corep{U} \tenscorep \Corep{S}^2,
    \Corep{T}^1 \tenscorep \Corep{F} \tenscorep 1).
  \end{equation}
  Now the operator~\(\Corep{T}^1_{12}\) is an intertwiner
  \[
  (\Hils[K]^1 \otimes \Hils[L] \otimes \Hils[K]^2,
  \Corep{S}^1\tenscorep \Corep{U} \tenscorep \Corep{S}^2,
  \Corep{T}^1 \tenscorep \Corep{F} \tenscorep 1)\to
  (\Hils[K]^1 \otimes \Hils[L] \otimes \Hils[K]^2,
  \Corep{S}^1\tenscorep \Corep{U} \tenscorep \Corep{S}^2,
  1 \tenscorep \Corep{F} \tenscorep 1).
  \]
  The unitary \(\Braiding{\Hils[L]}{\Hils[K]^2}\)
  gives an intertwiner from the last representation back to
  \[
  (\Hils[K]^1 \otimes \Hils[K]^2 \otimes \Hils[L],
  \Corep{S}^1 \tenscorep \Corep{S}^2 \tenscorep \Corep{U},
  1 \tenscorep 1 \tenscorep \Corep{F}).
  \]
  Hence \(\Corep{T}^1\tenscorep \Corep{T}^2\)
  has the expected intertwining property to be a representation.

  Now we check Assumption~\ref{assum:arrow_cancellative}.  Let
  \(a\in\Bound(\Hils[K]^1)\)
  be such that \(a_1\in\U(\Hils[K]^1\otimes\Hils[K]^2)\)
  is an intertwiner for the tensor product representation.
  Since~\(a\)
  commutes with~\(\Corep{S}^2_{23}\),
  the equivariance with respect to
  \(\Corep{S}^1\tenscorep \Corep{S}^2\)
  gives that~\(a\)
  is \(\Corep{S}^1\)-equivariant.
  Since~\(a_1\)
  commutes with \(\Corep{T}^1\tenscorep\Corep{T}^2\),
  \((\Braiding{\Hils[L]}{\Hils[K]^2 \otimes \Hils[K]^3})_{234}\),
  and \(\Corep{T}^2_{23}\),
  it follows that~\(a\)
  is an intertwiner for~\(\Corep{T}^1_{12}\) as well.

  Finally, the functors \(\Forget\)
  and~\(\Trivial\)
  are strict tensor functors by definition, and~\(\Trivial(\C)\)
  with the canonical unit transformations is indeed a tensor unit.
\end{proof}

\begin{proposition}
  \label{pro:braided_absorber}
  The representation
  \[
  \rho = (\Hils\otimes\Hils[L],
  \Multunit\tenscorep\Corep{U},
  1\tenscorep\BrMultunit)
  \]
  is a natural right absorber for the tensor category
  \(\Corepcat{\Multunit,\Corep{U},\DuCorep{V},\BrMultunit}\).
\end{proposition}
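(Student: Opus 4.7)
The idea is that $\rho$ splices two obvious pieces of absorber structure together: the canonical absorber $(\Hils,\Multunit)$ for~$\Multunit$ from Example~\ref{exa:Multunit_absorber}, and the distinguished representation $(\Hils[L],\Corep{U},\BrMultunit)$ of the full braided data noted right after~\eqref{eq:braid-Z-def}. Correspondingly, the natural unitary $U^x\colon x\otimes\rho\to\Trivial(x)\otimes\rho$ should be built out of $\Corep{S}_{12}$ and $\Corep{T}_{13}$, with a braiding unitary on the $\Hils$- and $\Hils[L]$-legs of $\rho$ mediating between them in the spirit of~\eqref{eq:braid-Z-def}. The resulting multiplicative unitary $U^\rho$ on $(\Hils\otimes\Hils[L])^{\otimes 2}$ should recover the semidirect product of~\cite{Meyer-Roy-Woronowicz:Qgrp_proj,Roy:Qgrp_with_proj}, which involves all four operators $\Multunit,\Corep{U},\DuCorep{V},\BrMultunit$.

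First, verify that $\rho$ is an object of the category. The pair $\Multunit\tenscorep\Corep{U}$ is a $\Multunit$-representation on $\Hils\otimes\Hils[L]$ by the tensor structure on $\Corepcat{\Multunit}$ (Example~\ref{exa:Corepcat}). That $1\tenscorep\BrMultunit$ satisfies the equivariance~\eqref{eq:T_SU-invariant} with respect to $(\Multunit\tenscorep\Corep{U})\tenscorep\Corep{U}$ and the top-braided representation condition~\eqref{eq:T_corep} are precisely two of the seven defining axioms of the braided multiplicative unitary, with the $\Hils$-factor of $\rho$ acting as a bystander throughout.

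Second, write $U^x$ explicitly and verify it is an arrow in the category. The $\Multunit$-side intertwining uses the pentagon~\eqref{eq:S_corep} for $\Corep{S}$, the equivariance~\eqref{eq:T_SU-invariant} of $\Corep{T}$, and the definition~\eqref{eq:braid-Z-def} of the braiding to absorb an extra $\DuCorep{V}$ that appears; the $\BrMultunit$-side intertwining is packaged by the representation condition~\eqref{eq:T_corep} of $\Corep{T}$, which was identified in the proof of Lemma~\ref{lem:corep_braided} as saying precisely that $\Corep{T}$ intertwines $\Corep{T}\tenscorep\BrMultunit$ with $1\tenscorep\BrMultunit$. Naturality~\ref{en:natural_absorber1} is then immediate, since an arrow $a\colon x_1\to x_2$ commutes with both $\Corep{S}^i$ and $\Corep{T}^i$ by definition, hence with $U^{x_i}$ after tensoring with the identity on $\Hils\otimes\Hils[L]$.

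The main obstacle is axiom~\ref{en:natural_absorber2}. On the four-leg space $\Hils[K]^1\otimes\Hils[K]^2\otimes\Hils\otimes\Hils[L]$, expand $U^{x_1\otimes x_2}$ using $\Corep{S}^1\tenscorep\Corep{S}^2=\Corep{S}^1_{13}\Corep{S}^2_{23}$ together with
\[
\Corep{T}^1\tenscorep\Corep{T}^2
= (\Braiding{\Hils[L]}{\Hils[K]^2})^{\phantom1}_{23}\,\Corep{T}^1_{12}\,
(\Dualbraiding{\Hils[K]^2}{\Hils[L]})^{\phantom1}_{23}\,\Corep{T}^2_{23},
\]
and compare with the composite $U^{x_1}_{13}U^{x_2}_{23}$. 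Unwinding the braiding $\Braiding{\Hils[L]}{\Hils[K]^2}$ through~\eqref{eq:braid-Z-def} in terms of $\Corep{S}^2$ and $\DuCorep{V}$ reduces the axiom to an algebraic identity among the four unitaries $\Multunit,\Corep{U},\DuCorep{V},\BrMultunit$ that uses essentially all seven defining conditions of the braided multiplicative unitary---exactly the calculation that makes the direct verification of the semidirect product in~\cite{Meyer-Roy-Woronowicz:Qgrp_proj} laborious. This simplification parallels the associativity computation for $\tenscorep$ carried out in the proof of Lemma~\ref{lem:corep_braided} (cf.~\eqref{eq:triple_tensor}).
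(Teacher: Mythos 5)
Your construction of the absorbing unitary matches the paper's: the intertwiner \(x\otimes\rho\to\Trivial(x)\otimes\rho\) is the composite of \(\Corep{T}\tenscorep 1_{\Hils}\) (an intertwiner by the characterisation of braided representations obtained in the proof of Lemma~\ref{lem:corep_braided}) with \(\Corep{S}_{12}\) (an intertwiner by Example~\ref{exa:Multunit_absorber}), that \(\rho\) is an object follows from the remarks after~\eqref{eq:braid-Z-def}, and naturality~\ref{en:natural_absorber1} is indeed automatic. The genuine gap is your treatment of~\ref{en:natural_absorber2}: you do not verify it, you only assert that after expanding \(U^{x_1\otimes x_2}\) and \(U^{x_1}_{13}U^{x_2}_{23}\) it ``reduces to an algebraic identity among \(\Multunit,\Corep{U},\DuCorep{V},\BrMultunit\) using all seven conditions,'' that is, exactly the laborious computation of~\cite{Meyer-Roy-Woronowicz:Qgrp_proj}. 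That is a deferral, not a proof, and the reduction is not even accurate: for general objects \(x_1,x_2\) the identity to be checked involves the representation operators \(\Corep{S}^1,\Corep{T}^1,\Corep{S}^2,\Corep{T}^2\) and their defining relations \eqref{eq:S_corep}--\eqref{eq:T_corep}, not only the four structure unitaries, so the computation in~\cite{Meyer-Roy-Woronowicz:Qgrp_proj} would at most settle the special case \(x_1=x_2=\rho\) (the pentagon equation for the semidirect product), which is strictly weaker than~\ref{en:natural_absorber2}. Invoking that computation would also defeat the purpose of the whole construction, which is to avoid it.

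What is missing is precisely the content of the paper's proof: a diagram chase in \(\Corepcat{\Multunit,\Corep{U},\DuCorep{V},\BrMultunit}\) (Figure~\ref{fig:braided_corep_absorber}) whose outer boundary is~\ref{en:natural_absorber2} and each of whose cells commutes for a simple structural reason --- the two pentagons by the definition of the absorbing intertwiners \(A^{x_1}\) and \(A^{x_2}\); the braiding triangles by the multiplicativity of the braiding, \(\Braiding{\Hils[L]}{\Hils[K]^1\otimes\Hils[K]^2} = \Braiding{\Hils[L]}{\Hils[K]^2}_{23}\Braiding{\Hils[L]}{\Hils[K]^1}_{12}\), established in the proof of Lemma~\ref{lem:corep_braided}; the parallelograms by naturality of the braiding with respect to intertwiners of \(\Multunit\)-representations; one square because the operators act on disjoint legs; and one square because the braiding over a leg carrying the trivial \(\Multunit\)-representation is the tensor flip. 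If you prefer an ``expand everything'' argument instead, you must actually carry that computation out; as written, the decisive axiom of a natural right absorber remains unproved.
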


\begin{proof}
  We must construct an intertwiner
  \(A^x\colon x\otimes \rho \to \Trivial(x)\otimes\rho\)
  for any representation \(x=(\Hils[K],\Corep{S},\Corep{T})\)
  of \((\Multunit,\Corep{U},\DuCorep{V},\BrMultunit)\).
  We claim that the composite operator
  \begin{multline*}
    (\Hils[K]\otimes \Hils\otimes\Hils[L],
    \Corep{S} \tenscorep \Multunit \tenscorep \Corep{U},
    \Corep{T} \tenscorep 1_{\Hils} \tenscorep \BrMultunit)
    \xrightarrow{\Corep{T}\tenscorep 1_{\Hils}}
    (\Hils[K]\otimes \Hils\otimes\Hils[L],
    \Corep{S}\tenscorep \Multunit \tenscorep \Corep{U},
    1_{\Hils[K]} \tenscorep 1_{\Hils} \tenscorep \BrMultunit)
    \\\xrightarrow{\Corep{S}_{12}}
    (\Hils[K]\otimes \Hils\otimes\Hils[L],
    1_{\Hils[K]}\tenscorep \Multunit \tenscorep \Corep{U},
    1_{\Hils[K]} \tenscorep 1_{\Hils} \tenscorep \BrMultunit)
  \end{multline*}
  has the properties required in
  Definition~\ref{def:natural_absorber}.  The triple
  \((\Hils[K],\Corep{S},1)\)
  is a representation of
  \((\Multunit,\Corep{U},\DuCorep{V},\BrMultunit)\)
  for any right representation~\(\Corep{S}\)
  of~\(\Multunit\),
  and a map between representations of this form is an intertwiner
  if and only if it is an intertwiner for the representations
  of~\(\Multunit\).
  In particular, the second map~\(\Corep{S}_{12}\)
  above is an intertwiner, see Example~\ref{exa:Multunit_absorber}.
  Moreover, since there are representations \((\Hils,\Multunit,1)\)
  and
  \(x \otimes (\Hils,\Multunit,1) =
  (\Hils[K]\otimes\Hils,\Corep{S}\tenscorep\Multunit,
  \BrMultunit\tenscorep 1)\),
  the map \(\Corep{T}\tenscorep 1_{\Hils}\)
  above is an intertwiner as well, see the proof of
  Lemma~\ref{lem:corep_braided}.  Thus the composite map is an
  intertwiner \(x\otimes\rho \to \Trivial(x)\otimes\rho\)
  as needed.  These two operators and their composite are natural by
  construction, that is, \ref{en:natural_absorber1} holds.
  We check condition \ref{en:natural_absorber2}.  Let
  \((\Hils[K]^i,\Corep{S}^i,\Corep{T}^i)\) be representations of
  \((\Multunit,\Corep{U},\DuCorep{V},\BrMultunit)\).  We shall use
  the diagram in Figure~\ref{fig:braided_corep_absorber}.
  \begin{figure}[htbp]
    \centering
    \begin{tikzpicture}
      \matrix (m) [cd,column sep=3em, row sep=8ex] {
        \tworow{\Corep{S}^1}{\Corep{T}^1} \tworow{\Corep{S}^2}{\Corep{T}^2} \tworow{\Corep{U}}{\BrMultunit} \tworow{\Multunit}{1}&
        \tworow{\Corep{S}^1}{\Corep{T}^1} \tworow{\Corep{S}^2}{1} \tworow{\Corep{U}}{\BrMultunit} \tworow{\Multunit}{1}&
        \tworow{\Corep{S}^1}{\Corep{T}^1} \tworow{\Corep{U}}{\BrMultunit} \tworow{\Corep{S}^2}{1} \tworow{\Multunit}{1}&
        \tworow{\Corep{S}^1}{1} \tworow{\Corep{U}}{\BrMultunit} \tworow{\Corep{S}^2}{1} \tworow{\Multunit}{1}&
        \tworow{\Corep{S}^1}{1} \tworow{\Corep{S}^2}{1} \tworow{\Corep{U}}{\BrMultunit} \tworow{\Multunit}{1}\\
        \tworow{\Corep{S}^1}{\Corep{T}^1} \tworow{\Corep{S}^2}{\Corep{T}^2} \tworow{\Multunit}{1} \tworow{\Corep{U}}{\BrMultunit}&
        \tworow{\Corep{S}^1}{\Corep{T}^1} \tworow{\Corep{S}^2}{1} \tworow{\Multunit}{1} \tworow{\Corep{U}}{\BrMultunit}&
        \tworow{\Corep{S}^1}{\Corep{T}^1} \tworow{\Corep{U}}{\BrMultunit} \tworow{1}{1} \tworow{\Multunit}{1}&
        \tworow{\Corep{S}^1}{1} \tworow{\Corep{U}}{\BrMultunit} \tworow{1}{1} \tworow{\Multunit}{1}&
        \tworow{\Corep{S}^1}{1} \tworow{\Corep{S}^2}{1} \tworow{\Multunit}{1} \tworow{\Corep{U}}{\BrMultunit}\\
        &\tworow{\Corep{S}^1}{\Corep{T}^1} \tworow{1}{1} \tworow{\Multunit}{1} \tworow{\Corep{U}}{\BrMultunit}&
        \tworow{\Corep{S}^1}{\Corep{T}^1} \tworow{1}{1} \tworow{\Corep{U}}{\BrMultunit} \tworow{\Multunit}{1}&
        \tworow{\Corep{S}^1}{1} \tworow{1}{1} \tworow{\Corep{U}}{\BrMultunit} \tworow{\Multunit}{1}&
        \tworow{\Corep{S}^1}{1} \tworow{1}{1} \tworow{\Multunit}{1} \tworow{\Corep{U}}{\BrMultunit}\\
        &&&&\tworow{1}{1} \tworow{1}{1} \tworow{\Multunit}{1} \tworow{\Corep{U}}{\BrMultunit}\\
      };
      \labelar{m-1-1}{\Corep{T}^2_{23}}{}{m-1-2};
      \labelar{m-1-2}{\Dualbraiding{\Hils[K]^2}{\Hils[L]}}{}{m-1-3};
      \labelar{m-1-3}{\Corep{T}^1_{12}}{}{m-1-4};
      \labelar{m-1-4}{\Braiding{\Hils[L]}{\Hils[K]^2}}{}{m-1-5};
      \labelar{m-1-2}{}{\Braiding{\Hils[L]}{\Hils[H]}}{m-2-2};
      \labelar{m-1-3}{\Corep{S}^2_{34}}{}{m-2-3};
      \labelar{m-1-4}{\Corep{S}^2_{34}}{}{m-2-4};
      \labelar{m-1-4}{\Braiding{\Hils[L]}{\Hils[K]^2\Hils[H]}}{}{m-2-5};
      \labelar{m-1-5}{\Braiding{\Hils[L]}{\Hils[H]}}{}{m-2-5};
      \labelar{m-2-1}{\Dualbraiding{\Hils[H]}{\Hils[L]}}{}{m-1-1};
      \labelar{m-2-2}{\Dualbraiding{\Hils[K]^2\Hils[H]}{\Hils[L]}}{}{m-1-3};
      \labelar{m-2-3}{\Corep{T}^1_{12}}{}{m-2-4};
      \labelar{m-2-1}{}{A^2_{234}}{m-3-2};
      \labelar{m-2-2}{}{\Corep{S}^2_{23}}{m-3-2};
      \labelar{m-2-4}{\Braiding{\Hils[L]}{\Hils[K]^2\Hils[H]}}{}{m-3-5};
      \labelar{m-2-5}{\Corep{S}^2_{23}}{}{m-3-5};
      \labelar{m-3-2}{\Dualbraiding{\Hils[K]^2\Hils[H]}{\Hils[L]}}{}{m-2-3};
      \labelar{m-3-3}{\Dualbraiding{\Hils[K]^2}{\Hils[L]}}{}{m-2-3};
      \labelar{m-3-4}{\Dualbraiding{\Hils[K]^2}{\Hils[L]}}{}{m-2-4};
      \labelar{m-3-2}{\Dualbraiding{\Hils[H]}{\Hils[L]}}{}{m-3-3};
      \labelar{m-3-3}{\Corep{T}^1_{13}}{}{m-3-4};
      \labelar{m-3-4}{\Braiding{\Hils[L]}{\Hils[H]}}{}{m-3-5};
      \labelar{m-3-2}{}{A^1_{134}}{m-4-5};
      \labelar{m-3-5}{\Corep{S}^1_{13}}{}{m-4-5};
    \end{tikzpicture}
    \caption{Commuting diagram in
      \(\Corepcat{\Multunit,\Corep{U},\DuCorep{V},\BrMultunit}\)
      that proves the condition~\ref{en:natural_absorber2}.}
    \label{fig:braided_corep_absorber}
  \end{figure}
  This diagram uses short-hand notation for
  representations.  For instance,
  \(\tworow{\Corep{S}^1}{\Corep{T}^1}
  \tworow{\Corep{S}^2}{\Corep{T}^2} \tworow{\Corep{U}}{\BrMultunit}
  \tworow{\Multunit}{1}\) denotes the representation
  \[
  (\Hils[K]^1\otimes\Hils[K]^2\otimes\Hils[L]\otimes\Hils[H],
  \Corep{S}^1\tenscorep \Corep{S}^2\tenscorep \Corep{U}\tenscorep \Multunit,
  \Corep{T}^1\tenscorep \Corep{T}^2\tenscorep \BrMultunit\tenscorep 1).
  \]
  All braiding operators in this diagram exist because the Hilbert
  space~\(\Hils[L]\)
  is on the top strand.  They are intertwiners of braided
  representations, compare~\eqref{eq:braiding_intertwiner}.  The
  remaining arrows are also intertwiners of braided representations by
  the proof of Lemma~\ref{lem:corep_braided}.  Before we show that the
  diagram in Figure~\ref{fig:braided_corep_absorber} commutes, we
  deduce the condition \ref{en:natural_absorber2} from it.  The
  arrow from the \((2,1)\)-entry
  to the \((2,5)\)-entry
  in Figure~\ref{fig:braided_corep_absorber} along the top boundary is
  the intertwiner
  \begin{multline*}
    \Corep{T}^1\tenscorep \Corep{T}^2\tenscorep 1_{\Hils[H]}\colon
    (\Hils[K]^1\otimes\Hils[K]^2\otimes\Hils\otimes\Hils[L],
    \Corep{S}^1\tenscorep \Corep{S}^2\tenscorep \Multunit\tenscorep \Corep{U},
    \Corep{T}^1\tenscorep \Corep{T}^2\tenscorep 1\tenscorep \BrMultunit)
    \\\to
    (\Hils[K]^1\otimes\Hils[K]^2\otimes\Hils\otimes\Hils[L],
    \Corep{S}^1\tenscorep \Corep{S}^2\tenscorep \Multunit\tenscorep \Corep{U},
    1\tenscorep 1\tenscorep 1\tenscorep F),
  \end{multline*}
  compare the proof in Lemma~\ref{lem:corep_braided} that the tensor
  product is associative.  And the arrow going downward from there is
  \((\Corep{S}^1 \tenscorep \Corep{S}^2)_{123}\).
  So the composite arrow is the absorbing intertwiner for the tensor
  product
  \((\Hils[K]^1\otimes\Hils[K]^2, \Corep{S}^1\tenscorep \Corep{S}^2,
  \Corep{T}^1\tenscorep \Corep{T}^2)\).
  Similarly, the arrows labeled \(A^2\)
  and~\(A^1\)
  are the absorbing intertwiners for
  \((\Hils[K]^2,\Corep{S}^2,\Corep{T}^2)\) and
  \((\Hils[K]^1,\Corep{S}^1,\Corep{T}^1)\), respectively.  Hence the
  commutativity of the boundary of the diagram in
  Figure~\ref{fig:braided_corep_absorber} is exactly
  \ref{en:natural_absorber2}.

  Now we check that the diagram in
  Figure~\ref{fig:braided_corep_absorber} commutes.
  The four triangles of braiding
  operators commute
  because the braiding operators have enough of the properties of a
  braided monoidal category, compare the proof in
  Lemma~\ref{lem:corep_braided}.  The two pentagons with \(A^1\)
  and~\(A^2\) as one of the faces commute by definition of our
  absorbing intertwiners.  The two parallelograms
  with~\(\Corep{S}^2\) and braiding operators commute because the
  braiding operators are natural with respect to intertwiners of
  \(\Multunit\)\nb-representations.  The square with
  \(\Corep{T}^1_{12}\) and~\(\Corep{S}^2_{34}\) commutes because we
  operate on different legs.  Finally, we consider the square
  involving~\(\Corep{T}^1\) and the braiding operator
  \(\Dualbraiding{\Hils[K]^2}{\Hils[L]}\).  Here~\(\Hils[K]^2\)
  carries the trivial representation of~\(\Multunit\), so
  that the braiding is just the tensor flip~\(\Sigma_{23}\).  Thus
  the square commutes, and now we have seen that the entire diagram
  commutes.
\end{proof}

\begin{theorem}
  \label{the:braided_to_multunit}
  The operator
  \[
  \Multunit[C] \defeq \Multunit_{13} \Corep{U}_{23} \DuCorep{V}_{34}^*
  \BrMultunit_{24} \DuCorep{V}_{34}
  \in\U(\Hils\otimes\Hils[L]\otimes\Hils\otimes\Hils[L])
  \]
  is a multiplicative unitary such that there is a fully faithful,
  strict tensor functor
  \(\Phi\colon \Corepcat{\Multunit,\Corep{U},\DuCorep{V},\BrMultunit}
  \to \Corepcat{\Multunit[C]}\)
  with \(\Forget\circ \Phi = \Forget\).
  The functor~\(\Phi\)
  maps a representation \((\Hils[K],\Corep{S},\Corep{T})\)
  of \((\Multunit,\Corep{U},\DuCorep{V},\BrMultunit)\)
  to the following representation of~\(\Multunit[C]\):
  \[
  \Corep{S}_{12} (\Corep{T}\tenscorep1_{\Hils})
  = \Corep{S}_{12} \DuCorep{V}_{23}^* \Corep{T}_{13} \DuCorep{V}_{23}
  \in \U(\Hils[K]\otimes\Hils\otimes\Hils[L]).
  \]
  The functor~\(\Phi\)
  is an isomorphism of categories if \(\Multunit[C]\)
  and~\(\Multunit\) are manageable.
\end{theorem}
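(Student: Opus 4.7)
The plan is to extract most of the statement for free from the framework of Section~\ref{sec:absorbers}, applied to the natural right absorber $\rho=(\Hils\otimes\Hils[L],\Multunit\tenscorep\Corep{U},1\tenscorep\BrMultunit)$ constructed in Proposition~\ref{pro:braided_absorber}. Indeed, Proposition~\ref{pro:absorber} at once produces a multiplicative unitary~$U^\rho$ together with a fully faithful strict tensor functor $\Phi\colon\Corepcat{\Multunit,\Corep{U},\DuCorep{V},\BrMultunit}\to\Corepcat{U^\rho}$ intertwining the forgetful functors to~$\Hilb$. It remains to identify $U^\rho$ with~$\Multunit[C]$, to match~$\Phi$ with the claimed explicit formula, and to prove essential surjectivity in the manageable case.

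For the first two tasks, recall from the proof of Proposition~\ref{pro:braided_absorber} that the absorbing intertwiner $A^x\colon x\otimes\rho\to\Trivial(x)\otimes\rho$ for $x=(\Hils[K],\Corep{S},\Corep{T})$ is the composite $\Corep{S}_{12}\circ(\Corep{T}\tenscorep 1_{\Hils})$. The key computational step is the identity $\Corep{T}\tenscorep 1_{\Hils}=\DuCorep{V}_{23}^*\Corep{T}_{13}\DuCorep{V}_{23}$, obtained by unfolding the definition of~$\tenscorep$ on the $\Corep{T}$-legs and using~\eqref{eq:braid-Z-def} together with the left-representation axiom relating~$\Multunit$ and~$\DuCorep{V}$ to compute the braiding $\Braiding{\Hils[L]}{\Hils}$ associated to $(\Hils,\Multunit,1)$ as $\DuCorep{V}^*\Flip$. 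This gives the claimed formula for~$\Phi$ and, once we specialize to $x=\rho$ and track leg labels on $\Hils\otimes\Hils[L]\otimes\Hils\otimes\Hils[L]$, converts $(\Multunit\tenscorep\Corep{U})_{12}$ to $\Multunit_{13}\Corep{U}_{23}$ and $(1\tenscorep\BrMultunit)_{13}$ to $\BrMultunit_{24}$, producing $U^\rho=\Multunit[C]$.

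The main obstacle is essential surjectivity when $\Multunit[C]$ and~$\Multunit$ are manageable. The explicit formula for~$\Phi$ suggests inverting it: given a right representation $\Corep{Z}\in\U(\Hils[K]\otimes\Hils\otimes\Hils[L])$ of~$\Multunit[C]$, any preimage $(\Corep{S},\Corep{T})$ must satisfy $\Corep{T}_{13}=\DuCorep{V}_{23}\Corep{S}_{12}^*\Corep{Z}\DuCorep{V}_{23}^*$. My strategy is first to produce~$\Corep{S}$ as a right representation of~$\Multunit$ on~$\Hils[K]$ from~$\Corep{Z}$ using the canonical bicharacter $\Multunit[C]\to\Multunit$ that is visible in the explicit form of~$\Multunit[C]$ and the functorial construction of Proposition~\ref{pro:functor_Corep_morphism}, then to define the candidate $\Corep{T}$ by the formula above.

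The crux is proving that $\DuCorep{V}_{23}\Corep{S}_{12}^*\Corep{Z}\DuCorep{V}_{23}^*$ acts trivially on the middle~$\Hils$-leg, so that~$\Corep{T}$ is a genuine unitary in $\U(\Hils[K]\otimes\Hils[L])$; this is exactly where manageability enters. The idea is to slice the pentagon equation for~$\Multunit[C]$ against functionals on the middle leg and to exploit that such slices generate the $\Cst$-algebra of the manageable multiplicative unitary~$\Multunit$, so that commutation with all these slices forces independence on that leg. Once~$\Corep{T}$ is recognized as two-legged, the representation conditions~\eqref{eq:T_SU-invariant} and~\eqref{eq:T_corep} fall out by algebraic manipulation of the pentagon for~$\Multunit[C]$, rewriting the $\DuCorep{V}_{34}$ and~$\BrMultunit_{24}$ factors by means of the braided multiplicative unitary axioms for $(\Multunit,\Corep{U},\DuCorep{V},\BrMultunit)$.
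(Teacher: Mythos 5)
Your first two steps coincide with the paper's argument: invoke Proposition~\ref{pro:braided_absorber} and Proposition~\ref{pro:absorber}, then compute the braiding $\Braiding{\Hils[L]}{\Hils}$ for $(\Hils,\Multunit,1)$ as $\DuCorep{V}^*\Sigma$ from \eqref{eq:braid-Z-def} and the left-representation property of~$\DuCorep{V}$, which yields $U^\rho=\Multunit[C]$ and the stated formula for~$\Phi$. That part is correct.

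The essential-surjectivity part, however, is a genuine gap, and it is precisely the part the theorem is really about. Defining $\Corep{S}$ via the bicharacter $\Multunit_{13}\Corep{U}_{23}$ and setting $\Corep{T}_{13}\defeq\DuCorep{V}_{23}\Corep{S}_{12}^*\Corep{Z}\DuCorep{V}_{23}^*$ is also how the paper starts, but your proposed mechanism for showing that this operator has trivial middle leg does not work as stated: slicing the pentagon equation for~$\Multunit[C]$ (or for~$\Multunit$) against functionals produces elements of the quantum group \Star{}algebras $C$ and~$\hat{C}$ acting on~$\Hils$, and these are far from all of $\Bound(\Hils)$ in general; commuting with them does not force an operator in $\U(\Hils[K]\otimes\Hils\otimes\Hils[L])$ to be of the form $\Corep{T}_{13}$. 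You would need commutation with enough of $\Bound(\Hils)$ on the middle leg, and no argument is offered for that. Likewise, ``the representation conditions \eqref{eq:T_SU-invariant} and \eqref{eq:T_corep} fall out by algebraic manipulation'' is exactly the unpleasant direct verification the paper deliberately avoids (it would involve all the axioms of the braided multiplicative unitary), and you do not carry it out. The paper instead proceeds indirectly: manageability of~$\Multunit[C]$ gives a natural \emph{left} absorber in $\Corepcat{\Multunit[C]}$ via the contragradient construction (Proposition~\ref{pro:left_right_absorber}), which may be taken to be a direct sum of copies of~$\Multunit[C]$ and hence comes from a braided representation; since a left absorber satisfies $\Corep{A}^1\tenscorep\Corep{A}^2\cong\Corep{A}^1$, every tensor product $\Corep{A}^1\tenscorep\Corep{A}^2$ comes from a braided representation, and the cancellation statement (Lemma~\ref{lem:lift_cancel}, proved by re-using the commuting diagram of Figure~\ref{fig:braided_corep_absorber} and the intertwiner characterisation from Lemma~\ref{lem:corep_braided}) then shows that $\Corep{A}^2$ itself comes from a braided representation. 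Note also that in your sketch the manageability of~$\Multunit[C]$ is never used, whereas it is essential in the paper's argument (it supplies the left absorber); only the manageability of~$\Multunit$ enters your construction of~$\Corep{S}$. As it stands, your proof establishes full faithfulness and the explicit formulas, but not that $\Phi$ is an isomorphism of categories.
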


The manageability of~\(\Multunit[C]\) is expressed
in~\cite{Meyer-Roy-Woronowicz:Qgrp_proj} in terms of the braided
multiplicative unitary
\((\Multunit,\Corep{U},\DuCorep{V},\BrMultunit)\).

\begin{proof}
  We have found a natural right absorber \((\rho,A)\)
  in Proposition~\ref{pro:braided_absorber}.
  Proposition~\ref{pro:absorber} shows that~\(A^\rho\)
  is a multiplicative unitary and that \(x\mapsto A^x\)
  is a fully faithful, strict tensor functor
  \(\Corepcat{\Multunit,\Corep{U},\DuCorep{V},\BrMultunit} \to
  \Corepcat{A^\rho}\).
  By definition,
  \(A^x = \Corep{S}_{12} (\Corep{T}\tenscorep 1_{\Hils}) =
  \Corep{S}_{12} (\Braiding{\Hils[L]}{\Hils})^{\phantom1}_{23}
  \Corep{T}_{12} (\Dualbraiding{\Hils}{\Hils[L]})^{\phantom1}_{23}\)
  and, in particular,
  \[
  A^\rho = (\Multunit\tenscorep\Corep{U})_{123}
  (1_{\Hils} \tenscorep \BrMultunit\tenscorep 1_{\Hils})
  = \Multunit_{13}\Corep{U}_{23}
  (\Braiding{\Hils[L]}{\Hils})_{34}
  \BrMultunit_{23}
  (\Dualbraiding{\Hils}{\Hils[L]})^{\phantom1}_{34}.
  \]
  The braiding unitary
  \(\Braiding{\Hils[L]}{\Hils} \in \U(\Hils[L]\otimes
  \Hils,\Hils\otimes \Hils[L])\)
  is equal to~\(Z\Sigma\)
  for the unique unitary \(Z\in \U(\Hils\otimes \Hils[L])\)
  that satisfies
  \[
  Z_{13} = \DuCorep{V}_{23} \Multunit[*]_{12} \DuCorep{V}_{23}^*
  \Multunit_{12}
  \qquad \text{in }\U(\Hils\otimes \Hils\otimes \Hils[L]),
  \]
  compare~\eqref{eq:braid-Z-def} and
  \cite{Meyer-Roy-Woronowicz:Twisted_tensor_2}*{(6.10)}.  We have
  \(\DuCorep{V}_{23} \Multunit_{12} \DuCorep{V}_{23}^* =
  \Multunit_{12} \DuCorep{V}_{13}\)
  because \(\DuCorep{V}\)
  is a left representation of~\(\Multunit\).
  Hence \(Z_{13} = \DuCorep{V}_{13}^*\).
  Since
  \(\Sigma_{34} \BrMultunit_{23} \Sigma_{34} = \BrMultunit_{24}\),
  we get the asserted formulas for \(A^x\)
  and~\(A^\rho\).
  We still have to prove that every representation of~\(\Multunit[C]\)
  comes from one of \((\Multunit,\Corep{U},\DuCorep{V},\BrMultunit)\).
  This will take a while and require some further results.  This proof
  will be completed at the end of this article.
\end{proof}

\begin{proposition}
  \label{pro:projection_on_C}
  The operators
  \(\Multunit_{13}\Corep{U}_{23} \in
  \U(\Hils\otimes\Hils[L]\otimes\Hils)\)
  and \(\Multunit_{12} \in\U(\Hils\otimes\Hils\otimes\Hils[L])\)
  are bicharacters from the multiplicative unitary~\(\Multunit[C]\)
  to \(\Multunit\in\U(\Hils\otimes\Hils)\)
  and back, whose composite from~\(\Multunit[C]\)
  to itself is equal to
  \(\ProjBichar \defeq \Multunit_{13} \Corep{U}_{23}
  \in\U(\Hils\otimes\Hils[L]\otimes\Hils\otimes\Hils[L])\).
  Equivalently, the following pentagon-like equations hold:
  \begin{equation}
    \label{eq:projection_pentagon}
    \ProjBichar_{23}\Multunit[C]_{12}
    = \Multunit[C]_{12}\ProjBichar_{13}\ProjBichar_{23},\qquad
    \Multunit[C]_{23} \ProjBichar_{12}
    = \ProjBichar_{12}\ProjBichar_{13}\Multunit[C]_{23},\qquad
    \ProjBichar_{23}\ProjBichar_{12} =
    \ProjBichar_{12}\ProjBichar_{13}\ProjBichar_{23}.
  \end{equation}
\end{proposition}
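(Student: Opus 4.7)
The plan is to derive the statement from Proposition~\ref{pro:functor_Corep_morphism} by applying it to two canonical strict tensor functors between the braided and unbraided representation categories, and to their composite. Define \(\Phi_1\colon \Corepcat{\Multunit,\Corep{U},\DuCorep{V},\BrMultunit}\to\Corepcat{\Multunit}\) by \((\Hils[K],\Corep{S},\Corep{T})\mapsto(\Hils[K],\Corep{S})\), and \(\Phi_2\colon \Corepcat{\Multunit}\to\Corepcat{\Multunit,\Corep{U},\DuCorep{V},\BrMultunit}\) by \((\Hils[K],\Corep{S})\mapsto(\Hils[K],\Corep{S},1)\).  Both commute with the forgetful functors to~\(\Hilb\).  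Strictness of~\(\Phi_1\) is immediate from the definition of~\(\tenscorep\), whose first component is the tensor product in~\(\Corepcat{\Multunit}\).  Strictness of~\(\Phi_2\) requires \(1\tenscorep 1=1\); by the definition of~\(\tenscorep\) on the \(\Corep{T}\)\nb-component, this reduces to \((\Braiding{\Hils[L]}{\Hils[K]^2})^{\phantom1}_{23}(\Dualbraiding{\Hils[K]^2}{\Hils[L]})^{\phantom1}_{23}=\Id\), which holds because these two braiding operators are mutual inverses.

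Apply Proposition~\ref{pro:functor_Corep_morphism}.  The absorbers \(\rho=(\Hils\otimes\Hils[L],\Multunit\tenscorep\Corep{U},1\tenscorep\BrMultunit)\) (Proposition~\ref{pro:braided_absorber}) and \((\Hils,\Multunit)\) (Example~\ref{exa:Multunit_absorber}) generate the multiplicative unitaries \(\Multunit[C]\) and~\(\Multunit\), respectively.  Since the absorbing intertwiner in~\(\Corepcat{\Multunit}\) for \((\Hils[K],\Corep{S})\) is just~\(\Corep{S}\), the proposition gives \(V^{\Phi_1} = U_2^{\Phi_1(\rho)} = \Multunit\tenscorep\Corep{U} = \Multunit_{13}\Corep{U}_{23}\).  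Using the formula \(A^x = \Corep{S}_{12}(\Corep{T}\tenscorep 1_{\Hils})\) from the proof of Theorem~\ref{the:braided_to_multunit}, which collapses to~\(\Corep{S}_{12}\) when \(\Corep{T}=1\), I also find \(V^{\Phi_2} = A^{(\Hils,\Multunit,1)} = \Multunit_{12}\).  Thus \(\Multunit_{13}\Corep{U}_{23}\) and~\(\Multunit_{12}\) are bicharacters as claimed.

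The functor \(\Psi\defeq\Phi_2\circ\Phi_1\colon(\Hils[K],\Corep{S},\Corep{T})\mapsto(\Hils[K],\Corep{S},1)\) is once more a strict tensor functor commuting with the forgetful functor, and it is idempotent because \(\Phi_1\circ\Phi_2=\Id\).  A third application of Proposition~\ref{pro:functor_Corep_morphism} produces a bicharacter \(V^\Psi = A^{\Psi(\rho)} = (\Multunit\tenscorep\Corep{U})^{\phantom1}_{12}\) on \(\Hils\otimes\Hils[L]\otimes\Hils\otimes\Hils[L]\) (with trivial fourth leg), which is precisely~\(\ProjBichar\).  The bicharacter equations from Proposition~\ref{pro:functor_Corep_morphism} for \(V^\Psi=\ProjBichar\) with respect to~\(\Multunit[C]\) yield the first two equations of~\eqref{eq:projection_pentagon}.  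The third pentagon equation for~\(\ProjBichar\) can either be interpreted conceptually as the idempotency relation \(V^\Psi\cdot V^\Psi = V^\Psi\) reflecting \(\Psi\circ\Psi=\Psi\) under the composition formula for bicharacters (compare the pattern \(V_{23}W_{12} = W_{12}X_{13}V_{23}\) from Proposition~\ref{pro:compare_absorbers}), or verified directly: using the pentagon for~\(\Multunit\), the representation condition \(\Multunit_{bc}\Corep{U}_{ab} = \Corep{U}_{ab}\Corep{U}_{ac}\Multunit_{bc}\) for~\(\Corep{U}\), and commutation of operators on disjoint legs, both sides of the pentagon for~\(\ProjBichar\) collapse on the six-leg space to the common expression \(\Multunit_{13}\Corep{U}_{23}\Multunit_{15}\Corep{U}_{25}\Multunit_{35}\Corep{U}_{45}\).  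The main subtlety in the whole argument is the leg bookkeeping that reconciles the pentagon equations of~\eqref{eq:projection_pentagon} on \((\Hils\otimes\Hils[L])^{\otimes 3}\) with the bicharacter equations on the differently-sized source and target spaces of \(\Phi_1\), \(\Phi_2\), and~\(\Psi\).
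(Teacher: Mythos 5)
Your proposal is correct and follows the paper's strategy in all essentials: the same two canonical strict tensor functors, the same appeal to Proposition~\ref{pro:functor_Corep_morphism}, and the same identification of the two bicharacters \(\Multunit_{13}\Corep{U}_{23}\) and \(\Multunit_{12}\) from the absorbing intertwiners.  The only real variation is how you reach~\eqref{eq:projection_pentagon}: you apply Proposition~\ref{pro:functor_Corep_morphism} a third time, to the idempotent composite functor \(\Psi=\Phi_2\circ\Phi_1\), which yields the first two equations directly, and you then check the pentagon for~\(\ProjBichar\) by an explicit six-leg computation (which is correct).  The paper instead verifies the five-leg equation \(\Multunit_{34}\Multunit_{13}\Corep{U}_{23} = \Multunit_{13}\Corep{U}_{23}\Multunit_{14}\Corep{U}_{24}\Multunit_{34}\) defining the composite bicharacter as in \cite{Meyer-Roy-Woronowicz:Homomorphisms}*{Definition 3.5} and then cites the general theory for the bicharacter and pentagon properties of~\(\ProjBichar\).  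One remark: the part of the statement you treat only ``conceptually''---that the composite of the two bicharacters equals~\(\ProjBichar\)---does not require any unproved functoriality of the correspondence \(\Phi\mapsto V^\Phi\): the defining equation of that composite is precisely the first bicharacter equation for \(V^{\Phi_1}\), that is, the statement that \(\Multunit_{13}\Corep{U}_{23}\) is a representation of~\(\Multunit\), which you have already established; making this identification explicit closes the only loose end in your write-up.
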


\begin{proof}
  There are two obvious strict tensor functors between the Hilbert
  space tensor categories
  \(\Corepcat{\Multunit,\Corep{U},\DuCorep{V},\BrMultunit}\)
  and \(\Corepcat{\Multunit}\), namely, the forgetful functor
  \[
  \Corepcat{\Multunit,\Corep{U},\DuCorep{V},\BrMultunit} \to
  \Corepcat{\Multunit},\qquad
  (\Hils[K],\Corep{S},\Corep{T}) \mapsto (\Hils[K],\Corep{S}),
  \]
  and the functor
  \[
  \Corepcat{\Multunit} \to
  \Corepcat{\Multunit,\Corep{U},\DuCorep{V},\BrMultunit},\qquad
  (\Hils[K],\Corep{S}) \mapsto (\Hils[K],\Corep{S},1_{\Hils[K]}).
  \]
  The definitions imply immediately that these are strict tensor
  functors that are compatible with the forgetful functors
  to~\(\Hilb\).
  Both tensor categories involved have natural right absorbers, and
  the associated multiplicative unitaries are \(\Multunit[C]\)
  and~\(\Multunit\),
  respectively.  Proposition~\ref{pro:functor_Corep_morphism} produces
  bicharacters from strict tensor functors like the ones above.
  Furthermore, the composite functor on~\(\Corepcat{\Multunit}\)
  is the identity.  Correspondingly, the composite bicharacter
  from~\(\Multunit\)
  to itself is the bicharacter that describes the identity functor,
  which is~\(\Multunit\)
  itself.  And the composite bicharacter from~\(\Multunit[C]\)
  to itself is idempotent, which means that it satisfies the pentagon
  equation.  It remains to compute the bicharacters that we
  get from the formulas in
  Proposition~\ref{pro:functor_Corep_morphism}.

  The bicharacter describing the functor
  \(\Corepcat{\Multunit,\Corep{U},\DuCorep{V},\BrMultunit} \to
  \Corepcat{\Multunit}\) is the canonical unitary intertwiner
  \[
  \Multunit\tenscorep\Corep{U} = \Multunit_{13}\Corep{U}_{23}\colon
  (\Hils\otimes\Hils[L], \Multunit\tenscorep\Corep{U}) \otimes
  (\Hils,\Multunit) \to
  (\Hils\otimes\Hils[L], 1) \otimes
  (\Hils,\Multunit),
  \]
  that is, we get
  \(\Multunit_{13}\Corep{U}_{23}\in
  \U(\Hils\otimes\Hils[L]\otimes\Hils)\).

  The bicharacter describing the functor
  \(\Corepcat{\Multunit} \to
  \Corepcat{\Multunit,\Corep{U},\DuCorep{V},\BrMultunit}\)
  is the natural isomorphism
  \[
  (\Hils,\Multunit,1) \otimes
  (\Hils\otimes\Hils[L],
  \Multunit\otimes\Corep{U},
  1\tenscorep\BrMultunit)
  \to
  (\Hils,1,1) \otimes
  (\Hils\otimes\Hils[L],
  \Multunit\otimes\Corep{U},
  1\tenscorep\BrMultunit)
  \]
  described during the proof of
  Proposition~\ref{pro:braided_absorber}.  Since the representation
  of~\(\BrMultunit\)
  is~\(1\) here,
  this simplifies to the unitary
  \(\Multunit_{12}\in \U(\Hils\otimes\Hils\otimes\Hils[L])\).

  By the definition of the composition of bicharacters in
  \cite{Meyer-Roy-Woronowicz:Homomorphisms}*{Definition 3.5}, the
  composite bicharacter from~\(\Multunit[C]\)
  to itself is~\(\Multunit_{13}\Corep{U}_{23}\)
  if and only if the following equation holds
  in
  \(\U(\Hils\otimes\Hils[L]\otimes\Hils\otimes\Hils\otimes\Hils[L])\):
  \[
  \Multunit_{34} (\Multunit_{13} \Corep{U}_{23})
  = (\Multunit_{13} \Corep{U}_{23})
  (\Multunit_{14} \Corep{U}_{24}) \Multunit_{34}
  \]
  Indeed, the representation property of~\(\Corep{U}\)
  and the pentagon equation for~\(\Multunit\) give
  \[
  \Multunit_{13} \Corep{U}_{23} \Multunit_{14} \Corep{U}_{24}
  \Multunit_{34}
  = \Multunit_{13} \Multunit_{14} \Corep{U}_{23} \Corep{U}_{24}
  \Multunit_{34}
  = \Multunit_{13} \Multunit_{14} \Multunit_{34} \Corep{U}_{23}
  = \Multunit_{34} \Multunit_{13} \Corep{U}_{23}
  \]
  as desired.  The general theory says that the unitaries
  in~\eqref{eq:projection_pentagon} are
  bicharacters and that the bicharacter~\(\ProjBichar\) is idempotent,
  that is, satisfies the pentagon equation.
\end{proof}

It remains to prove that every representation of~\(\Multunit[C]\)
comes from a representation of the braided multiplicative unitary
if~\(\Multunit[C]\)
is manageable.  That is, we want it to be of the form
\(\Corep{S}_{12} \DuCorep{V}_{23}^* \Corep{T}_{13} \DuCorep{V}_{23}\)
for some representation \((\Hils[K],\Corep{S},\Corep{T})\)
of \((\Multunit,\Corep{U},\DuCorep{V},\BrMultunit)\).
So we start with a representation \((\Hils[K],\Corep{A})\)
of~\(\Multunit[C]\).
The Hilbert space must remain~\(\Hils[K]\).
We have described the functor
\[
\Corepcat{\Multunit,\Corep{U},\DuCorep{V},\BrMultunit} \to
\Corepcat{\Multunit},\qquad
(\Hils[K],\Corep{S},\Corep{T})\mapsto (\Hils[K],\Corep{S}),
\]
through the bicharacter \(\Multunit_{13} \Corep{U}_{23}\)
from~\(\Multunit[C]\)
to~\(\Multunit\)
in Proposition~\ref{pro:projection_on_C}.  The proof of
Proposition~\ref{pro:functor_Corep_morphism} shows that there is a
unique unitary \(\Corep{S}\in\U(\Hils[K]\otimes\Hils)\) with
\begin{equation}
  \label{eq:induced_corep}
  (\Multunit_{24}\Corep{U}_{34})\Corep{A}_{123}
  = \Corep{A}_{123} \Corep{S}_{14} (\Multunit_{24}\Corep{U}_{34})
  \in \U(\Hils[K]\otimes \Hils \otimes \Hils[L] \otimes \Hils)
\end{equation}
because the multiplicative unitary~\(\Multunit\)
is manageable: this is the functor on representation categories
induced by the bicharacter \(\Multunit_{13} \Corep{U}_{23}\).
Now~\(\Corep{T}\)
should satisfy
\(\Corep{A}_{123} = \Corep{S}_{12} \DuCorep{V}_{23}^* \Corep{T}_{13}
\DuCorep{V}_{23}\), that is,
\[
\Corep{T}_{13} =
\DuCorep{V}_{23} \Corep{S}_{12}^* \Corep{A}_{123} \DuCorep{V}_{23}^*
\qquad\text{in }\U(\Hils[K]\otimes \Hils \otimes \Hils[L]).
\]
It remains to prove, first, that the right hand side has trivial
second leg, so that it comes from a unitary
\(\Corep{T}\in\U(\Hils[K]\otimes\Hils[L])\);
and, secondly, that \((\Hils[K],\Corep{S},\Corep{T})\)
is a representation of
\((\Multunit,\Corep{U},\DuCorep{V},\BrMultunit)\).
Since these computations are quite unpleasant, we proceed indirectly.
During this proof, we say that a
representation
of~\(\Multunit[C]\)
\emph{comes from a braided representation} if it belongs to the image
of the functor
\(\Corepcat{\Multunit,\Corep{U},\DuCorep{V},\BrMultunit}\to
\Corepcat{\Multunit[C]}\).

\begin{lemma}
  \label{lem:lift_cancel}
  Let \((\Hils[K]^1,\Corep{A}^1)\)
  and \((\Hils[K]^2,\Corep{A}^2)\)
  be representations of~\(\Multunit[C]\).
  If \((\Hils[K]^1,\Corep{A}^1)\)
  and
  \((\Hils[K]^1\otimes \Hils[K]^2,\Corep{A}^1\tenscorep \Corep{A}^2)\)
  come from braided representations, then so does
  \((\Hils[K]^2,\Corep{A}^2)\).
\end{lemma}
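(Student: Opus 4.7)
\smallskip

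Let $X_1 = (\Hils[K]^1, \Corep{S}^1, \Corep{T}^1)$ and $X = (\Hils[K]^1 \otimes \Hils[K]^2, \tilde{\Corep{S}}, \tilde{\Corep{T}})$ be braided representations with $\Phi(X_1) = \Corep{A}^1$ and $\Phi(X) = \Corep{A}^1 \tenscorep \Corep{A}^2$, chosen using the two hypotheses. The plan is to construct a braided representation $X_2 = (\Hils[K]^2, \Corep{S}^2, \Corep{T}^2)$ such that $X_1 \tenscorep X_2 = X$. Strict monoidality of $\Phi$ then gives $\Corep{A}^1 \tenscorep \Phi(X_2) = \Phi(X_1 \tenscorep X_2) = \Phi(X) = \Corep{A}^1 \tenscorep \Corep{A}^2$, and since left multiplication by the unitary $\Corep{A}^1_{134}$ is injective, this forces $\Phi(X_2) = \Corep{A}^2$ as desired.

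To identify $\Corep{S}^2$ I use that the forgetful functor $\Corepcat{\Multunit, \Corep{U}, \DuCorep{V}, \BrMultunit} \to \Corepcat{\Multunit}$ sending $(\Hils[K], \Corep{S}, \Corep{T}) \mapsto (\Hils[K], \Corep{S})$ factors as $\Phi$ followed by the functor on representation categories induced by the bicharacter $\Multunit_{13}\Corep{U}_{23}$ of Proposition~\ref{pro:projection_on_C}. Manageability of~$\Multunit$ ensures uniqueness of the solution to \eqref{eq:induced_corep}, giving a canonical $\Multunit$-representation $\Corep{S}^2 \in \U(\Hils[K]^2 \otimes \Hils)$ induced from $\Corep{A}^2$; monoidality of all functors involved then forces $\tilde{\Corep{S}} = \Corep{S}^1 \tenscorep \Corep{S}^2$. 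Next I define $\Corep{T}^2$ implicitly by solving the equation $\Corep{T}^1 \tenscorep \Corep{T}^2 = \tilde{\Corep{T}}$. Inverting the tensor product formula yields, on $\Hils[K]^1 \otimes \Hils[K]^2 \otimes \Hils[L]$,
\[
\Corep{T}^2_{23} := (\Braiding{\Hils[L]}{\Hils[K]^2})_{23}(\Corep{T}^1_{12})^{*}(\Dualbraiding{\Hils[K]^2}{\Hils[L]})_{23}\tilde{\Corep{T}},
\]
where the braiding $\Braiding{\Hils[L]}{\Hils[K]^2}$ is well defined because $\Corep{S}^2$ is already constructed.

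The technical heart of the proof is the verification that this operator genuinely factors as $\Id_{\Hils[K]^1} \otimes \Corep{T}^2$ for some unitary on $\Hils[K]^2 \otimes \Hils[L]$. This should follow from the intertwining condition~\eqref{eq:T_SU-invariant} applied to both $\Corep{T}^1$ (with respect to $\Corep{S}^1 \tenscorep \Corep{U}$) and $\tilde{\Corep{T}}$ (with respect to $\Corep{S}^1 \tenscorep \Corep{S}^2 \tenscorep \Corep{U}$): each expresses that the corresponding operator commutes with the $\Multunit$-action on the $\Hils[K]^1$-leg, and combined with naturality of the braiding operators in that leg, all $\Hils[K]^1$-dependent pieces should cancel out.

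Once $\Corep{T}^2$ is defined this way, I verify the braided-representation axioms \eqref{eq:T_SU-invariant} and \eqref{eq:T_corep} for $X_2$. Both are obtained from the corresponding conditions for $X = X_1 \tenscorep X_2$ by cancelling the $X_1$-factor: for equivariance one can use Assumption~\ref{assum:arrow_cancellative} applied to the intertwiner $\Corep{T}^2_{23}$ on the $\Hils[K]^1$-leg, and \eqref{eq:T_corep} follows similarly from the corresponding identity for $\tilde{\Corep{T}}$ by pushing $\Corep{T}^1_{12}$ and the braidings past the relevant $\BrMultunit$-leg using the intertwining properties of $\Corep{T}^1$. The main obstacle is precisely the leg-triviality verification for $\Corep{T}^2$: the formula is transparent, but rigorously establishing that every $\Hils[K]^1$-dependent factor cancels requires careful bookkeeping of braidings, intertwiners, and equivariance conditions, similar in spirit to the commuting diagram of Figure~\ref{fig:braided_corep_absorber}.
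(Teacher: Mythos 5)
Your overall strategy---divide the given braided representation \(X=(\Hils[K]^1\otimes\Hils[K]^2,\tilde{\Corep{S}},\tilde{\Corep{T}})\) by the first factor \(X_1\) and then cancel \(\Corep{A}^1\) after applying~\(\Phi\)---is a legitimate reorganisation of the argument, and the identification \(\tilde{\Corep{S}}=\Corep{S}^1\tenscorep\Corep{S}^2\) via the bicharacter \(\Multunit_{13}\Corep{U}_{23}\) and \eqref{eq:induced_corep} is fine. But the step you yourself flag as the technical heart has a genuine gap: the claim that the operator \(B\defeq(\Braiding{\Hils[L]}{\Hils[K]^2})_{23}(\Corep{T}^1_{12})^*(\Dualbraiding{\Hils[K]^2}{\Hils[L]})_{23}\tilde{\Corep{T}}\) has trivial \(\Hils[K]^1\)-leg cannot follow from \eqref{eq:T_SU-invariant} for \(\Corep{T}^1\) and \(\tilde{\Corep{T}}\) together with naturality of the braidings. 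Those are symmetry conditions satisfied by plenty of operators with nontrivial first leg (\(\tilde{\Corep{T}}\) itself, for instance), and they make no reference whatsoever to \(\Corep{A}^2\). Indeed, a braided representation on \(\Hils[K]^1\otimes\Hils[K]^2\) whose underlying \(\Multunit\)-representation is \(\Corep{S}^1\tenscorep\Corep{S}^2\) need not factor as \(X_1\tenscorep X_2\) at all (already in the degenerate case of trivial \(\Multunit\), where braided representations are just representations of \(\BrMultunit\), a representation on a tensor product Hilbert space is rarely a tensor product of representations). The only thing that forces the factorisation is the hypothesis \(\Phi(X)=\Corep{A}^1\tenscorep\Corep{A}^2\), i.e.\ the fact that after applying \(\Phi\) the second factor acts only on the legs \(\Hils[K]^2\otimes\Hils\otimes\Hils[L]\); your sketch of the leg-triviality argument never invokes this, so the proposed mechanism cannot close the gap.

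For comparison, the paper attacks the same crux from the other side: it reads off candidates \(\Corep{S}^2\) and \(\Corep{T}^2\) directly from \(\Corep{A}^2\) (so the candidate \(\Corep{T}^2\) automatically ignores \(\Hils[K]^1\) but may a priori have a nontrivial \(\Hils\)-leg), then re-runs the commuting diagram of Figure~\ref{fig:braided_corep_absorber} inside \(\Corepcat{\Multunit}\) with \(\Corep{T}^2_{23}\) replaced by \(\Corep{T}^2_{234}\), using only that \((\Corep{S}^1,\Corep{T}^1)\) is braided. The boundary of that diagram expresses \(\Corep{A}^1\tenscorep\Corep{A}^2\) through the candidate data, and comparing with the braided form \((\Corep{S}^1\tenscorep\Corep{S}^2,\Corep{T})\) of the product representation---which exists by hypothesis---cancels all unitaries except \(\Corep{T}^2_{234}\) against \(\Corep{T}_{123}\), forcing the extra leg to be trivial. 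Some computation of this kind, explicitly tying \(\tilde{\Corep{T}}\) (or your \(B\)) to \(\Corep{A}^2\) through the formula \(\Corep{A}=\Corep{S}_{12}\DuCorep{V}_{23}^*\Corep{T}_{13}\DuCorep{V}_{23}\), is unavoidable; once you supply it, your remaining steps (verifying that \((\Corep{S}^2,\Corep{T}^2)\) is braided by cancelling the \(X_1\)-factor via Assumption~\ref{assum:arrow_cancellative} and the intertwiner characterisation from Lemma~\ref{lem:corep_braided}, and then cancelling the unitary \(\Corep{A}^1\) to get \(\Phi(X_2)=\Corep{A}^2\)) do match the paper's concluding paragraph and are sound.
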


\begin{proof}
  We define~\(\Corep{S}^i\)
  and~\(\Corep{T}^i\)
  for \(i=1,2\)
  as above.  We know that \((\Hils[K]^1,\Corep{S}^1,\Corep{T}^1)\)
  is a braided representation.  But at first, we only know
  \(\Corep{T}^2 \in \U(\Hils[K]^2\otimes \Hils[L]\otimes\Hils)\).
  We may, nevertheless, recycle the diagram in
  Figure~\ref{fig:braided_corep_absorber}, treating it as a diagram in
  \(\Corepcat{\Multunit}\)
  only, and replacing the top left arrow \(\Corep{T}^2_{23}\)
  by \(\Corep{T}^2_{234}\).
  The two pentagons still commute by definition of
  \(\Corep{S}^i,\Corep{T}^i\).
  The four triangles of braiding operators in
  Figure~\ref{fig:braided_corep_absorber} commute as before.  So do
  the parallelograms containing \(\Corep{S}^2_{23}\)
  and braiding operators, and the two squares in the middle: this only
  needs \((\Corep{S}^1,\Corep{T}^1)\)
  to be a braided representation, which we have assumed.  Hence the
  entire diagram commutes.  The composite arrow from the
  \((2,1)\)-entry
  to the \((5,4)\)-entry
  is the tensor product representation
  \(\Corep{A}^1 \tenscorep \Corep{A}^2\).
  We have assumed that this comes from a braided representation.  This
  must be of the form
  \((\Corep{S}^1\tenscorep \Corep{S}^2,\Corep{T})\)
  for some
  \(\Corep{T}\in \U(\Hils[K]^1\otimes\Hils[K]^2\otimes\Hils[L])\).
  Hence
  \[
  (\Dualbraiding{\Hils[K]^2}{\Hils[L]})_{23}
  \Corep{T}^1_{12}
  (\Braiding{\Hils[L]}{\Hils[K]^2})_{23}
  \Corep{T}^2_{234}
  = \Corep{T}_{123}.
  \]
  Therefore, \(\Corep{T}^2_{234}\)
  acts trivially on the fourth leg.  So
  \(\Corep{A}^2 = \Corep{S}^2_{12} \DuCorep{V}_{23}^* \Corep{T}^2_{13}
  \DuCorep{V}_{23}\)
  for some \(\Corep{T}^2\in\U(\Hils[K]^2\otimes\Hils[L])\).
  In the proof of Lemma~\ref{lem:corep_braided}, we have shown that a
  unitary~\(\Corep{T}\)
  in \(\U(\Hils[K]\otimes\Hils[L])\)
  together with a representation~\((\Hils[K],\Corep{S})\)
  of~\(\Multunit\)
  is a braided representation if and only if \(\Corep{T}\)
  is an intertwiner from \(\Corep{T}\tenscorep \BrMultunit\)
  to \(1_{\Hils[K]}\tenscorep \BrMultunit\).
  Therefore,
  \(\Corep{T}^1\tenscorep \Corep{T}^2=
  (\Braiding{\Hils[L]}{\Hils[K]^2})^{\phantom1}_{23} \Corep{T}^1_{12}
  (\Dualbraiding{\Hils[K]^2}{\Hils[L]})^{\phantom1}_{23}
  \Corep{T}^2_{23}\)
  and \(\Corep{T}^1\)
  are intertwiners of braided representations.  So are the braiding
  operators, compare~\eqref{eq:braiding_intertwiner}.  Hence
  \(\Corep{T}^2_{23}\in\U(\Hils[K]^1\otimes\Hils[K]^2\otimes\Hils[L])\)
  is an intertwiner of braided representations.  Then so
  is~\(\Corep{T}^2\)
  itself.  This means that~\((\Corep{S}^2,\Corep{T}^2)\)
  is a braided representation.
\end{proof}

Since~\(\Multunit[C]\)
is manageable, Proposition~\ref{pro:left_right_absorber} shows
that~\(\Corepcat{\Multunit[C]}\)
contains a (natural) left absorber~\(\Corep{A}^1\).
Even more, the proof shows that we may choose~\(\Corep{A}^1\)
to be isomorphic to a direct sum of copies of~\(\Multunit[C]\).
By definition, \(\Multunit[C]\)
comes from the braided representation
\((\Hils\otimes\Hils[L], \Multunit \tenscorep \Corep{U}, 1_{\Hils}
\tenscorep \BrMultunit)\).
Hence the direct sum of countably many copies of~\(\Multunit[C]\)
also comes from a braided representation.  Since
\(\Corep{A}^1\tenscorep\Corep{A}^2 \cong \Corep{A}^1 \tenscorep
1_{\Hils[K]^2} \cong \Corep{A}^1\)
for any representation~\((\Hils[K]^2,\Corep{A}^2)\),
\(\Corep{A}^1\tenscorep\Corep{A}^2\)
also comes from a braided representation.  Now
Lemma~\ref{lem:lift_cancel} shows that any
representation~\((\Hils[K]^2,\Corep{A}^2)\)
of~\(\Multunit[C]\)
comes from a braided representation.  This finishes the proof of
Theorem~\ref{the:braided_to_multunit}.

\begin{bibdiv}
  \begin{biblist}
    \bibselect{references}
  \end{biblist}
\end{bibdiv}
\end{document}